\theoremstyle{plain}
\let\mathbb\mathds
\DeclareMathOperator{\Lie}{Lie}
\DeclareMathOperator{\weight}{weight}
\DeclareMathOperator{\id}{id}
\DeclareMathOperator{\pr}{pr}
\DeclareMathOperator{\Frac}{Frac}
\DeclareMathOperator{\reg}{reg}
\DeclareMathOperator{\Ad}{Ad}
\DeclareMathOperator{\depth}{depth}
\DeclareMathOperator{\DR}{DR}
\DeclareMathOperator{\Int}{Int}
\DeclareMathOperator{\comm}{comm}
\DeclareMathOperator{\Spec}{Spec}
\DeclareMathOperator{\un}{un}
\DeclareMathOperator{\inv}{inv}
\DeclareMathOperator{\crys}{crys}
\DeclareMathOperator{\eval}{eval}
\DeclareMathOperator{\har}{har}
\DeclareMathOperator{\iter}{iter}
\DeclareMathOperator{\Li}{Li}
\DeclareMathOperator{\Sign}{Sign}
\DeclareMathOperator{\loc}{loc}
\DeclareMathOperator{\Hom}{Hom}
\DeclareMathOperator{\Map}{Map}
\DeclareMathOperator{\Wd}{Wd}
\DeclareMathOperator{\KZ}{KZ}
\DeclareMathOperator{\dec}{dec}
\newtheorem{Theorem}{Theorem}[section]
\newtheorem{Proposition}[Theorem]{Proposition}
\newtheorem{Lemma}[Theorem]{Lemma}
\newtheorem{Definition}[Theorem]{Definition}
\newtheorem{Proposition-Definition}[Theorem]{Proposition-Definition}
\newtheorem{Lemma-Definition}[Theorem]{Lemma-Definition}
\newtheorem{Example}[Theorem]{Example}
\newtheorem{Notation}[Theorem]{Notation}
\newtheorem{Convention}[Theorem]{Convention}
\newtheorem{Nota Bene}[Theorem]{Nota Bene}
\DeclareFontFamily{U}{russian}{}
\DeclareFontShape{U}{russian}{m}{n}
        { <5><6> wncyr5
        <7><8><9> wncyr7
        <10><10.95><12><14.4><17.28><20.74><24.88> wncyr10 }{}
\DeclareSymbolFont{Russian}{U}{russian}{m}{n}
\DeclareSymbolFontAlphabet{\mathcyr}{Russian}
\let\@math@cyr\mathcyr
\renewcommand{\mathcyr}[1]{\@math@cyr{\cyracc #1}}
\newcommand{\sh}{\mathcyr{sh}} 
\newcounter{subsubsubsection}[subsubsection]
\renewcommand\thesubsubsubsection{\thesubsubsection .\@alph\c@subsubsubsection}
\newcommand\subsubsubsection{\@startsection{subsubsubsection}{4}{\z@}%
                                     {-3.25ex\@plus -1ex \@minus -.2ex}%
                                     {1.5ex \@plus .2ex}%
                                     {\normalfont\normalsize\bfseries}}
\newcommand*\l@subsubsubsection{\@dottedtocline{3}{10.0em}{4.1em}}
\newcommand*{\subsubsubsectionmark}[1]{}
\numberwithin{equation}{section}
\author{David Jarossay}
\address{Universit\'{e} de Gen\`{e}ve, Section de mathématiques, 2-4 rue du Li`{e}vre,
	Case postale 64
	1211 Gen\`{e}ve 4, Suisse}
\email{david.jarossay@unige.ch}
\begin{document}

\title{$p$-adic multiple zeta values at roots of unity 
	\\ $\&$ $p$-adic pro-unipotent harmonic actions
\\ \text{ }
\\ IV : Around $p$-adic continuity and interpolation in $\mathbb{Z}_{p}^{\depth}$
\\ \text{ }
\\ IV-1 : $p$-adic multiple zeta values at roots of unity extended to sequences of integers of any sign}

\maketitle

\noindent

\begin{abstract}
This work is a study of $p$-adic multiple zeta values at roots of unity ($p$MZV$\mu_{N}$'s), the $p$-adic periods of the crystalline pro-unipotent fundamental groupoid of $(\mathbb{P}^{1} - \{0,\mu_{N},\infty\})/ \mathbb{F}_{q}$. The main tool is new objects which we call $p$-adic pro-unipotent harmonic actions. In this part IV we define and study $p$-adic analogues of some elementary complex analytic functions which interpolate multiple zeta values at roots of unity such as the multiple zeta functions. The indices of $p$MZV$\mu_{N}$'s involve sequences of positive integers ; in this IV-1, by considering an operation which we call localization (inverting certain integration operators) in the pro-unipotent fundamental groupoid of $\mathbb{P}^{1} - \{0,\mu_{N},\infty\}$, and by using $p$-adic pro-unipotent harmonic actions, we extend the definition of $p$MZV$\mu_{N}$'s to indices for which these integers can be negative, and we study these generalized  $p$MZV$\mu_{N}$'s.
\end{abstract}

\tableofcontents

\section{Introduction}

\subsection{Complex and $p$-adic multiple zeta values at roots of unity}

This work is a study of $p$-adic multiple zeta values at roots of unity ($p$MZV$\mu_{N}$'s), the $p$-adic periods of the crystalline pro-unipotent fundamental groupoid (abbreviated $\pi_{1}^{\un,\crys}$) of $(\mathbb{P}^{1} - \{0,\mu_{N},\infty\})/ \mathbb{F}_{q}$, with $\mathbb{F}_{q}$ of characteristic $p$ prime to $N$ which contains a primitive $N$-th root of unity. They are $p$-adic analogues of the (complex) multiple zeta values at roots of unity (MZV$\mu_{N}$'s), which are the following numbers. Let $\tilde{\xi}_{N} \in \mathbb{C}$ be a primitive $N$-th root of unity, $n_{1},\ldots,n_{d} \in \mathbb{N}^{\ast}$, and $j_{1},\ldots,j_{d} \in \mathbb{Z}/N\mathbb{Z}$ such that $(n_{d},j_{d}) \not= (1,0)$, and $(z_{i_{1}},\ldots,z_{i_{n}})=(\xi^{j_{1}},\overbrace{0,\ldots,0}^{n_{1}-1},\ldots,\xi^{j_{d}},\overbrace{0,\ldots,0}^{n_{d}-1})$, the associated multiple zeta value at $N$-th roots of unity is :
\begin{equation}
\label{eq:multizetas}
\zeta
\big((n_{i});(\tilde{\xi}_{N}^{j_{i}})\big)_{d} = (-1)^{d}\int_{0}^{1} \frac{d t_{n}}{t_{n} - z_{i_{n}}} \int_{0}^{t_{n-1}} \ldots \int_{0}^{t_{2}} \frac{dt_{1}}{t_{1} - z_{i_{1}}} = \sum_{0<m_{1}<\ldots<m_{d}} \frac{\big( \frac{\tilde{\xi}_{N}^{j_{2}}}{\tilde{\xi}_{N}^{j_{1}}}\big)^{m_{1}} \ldots \big( \frac{1}{\tilde{\xi}_{N}^{j_{d}}}\big)^{m_{d}}}{m_{1}^{n_{1}} \ldots m_{d}^{n_{d}}} \in \mathbb{C}
\end{equation}
where $n=n_{d}+\ldots+n_{1}$ is called the weight of $\big((n_{i});(\tilde{\xi}_{N}^{j_{i}})\big)_{d}$, and $d$ is called its depth.
\newline\indent The $p$MZV$\mu_{N}$'s (Definition \ref{MZV Deligne}) are defined abstractly, without explicit formulas, as $p$-adic analogues of the iterated path integrals in (\ref{eq:multizetas}). They are numbers $\zeta_{p,\alpha}\big((n_{i});(\xi_{N}^{j_{i}})\big)_{d} \in K = \Frac(W(\mathbb{F}_{q})) \subset \overline{\mathbb{Q}_{p}}$, where $\xi_{N} \in K$ is a primitive $N$-th root of unity, $\alpha \in (\mathbb{Z}-\{0\}) \cup \{\pm\infty\}$ represents the number of iterations of the Frobenius of $\pi_{1}^{\un,\crys}((\mathbb{P}^{1} - \{0,\mu_{N},\infty\})/ \mathbb{F}_{q})$, with $\mathbb{F}_{q}$, $d \in \mathbb{N}^{\ast}$ $n_{1},\ldots,n_{d} \in \mathbb{N}^{\ast}$, $j_{1},\ldots,j_{d} \in \mathbb{Z}/N\mathbb{Z}$. 
\newline\indent In the terminologies above, the term "at roots of unity" is usually omitted if $N=1$.

\subsection{Summary of parts I, II, III}

In part I \cite{I-1} \cite{I-2} \cite{I-3}, we have found a $p$-adic analogue of the series formula in (\ref{eq:multizetas}) which has two particular features : it is explicit and it keeps track of the motivic Galois action on $\pi_{1}^{\un,\crys}((\mathbb{P}^{1} - \{0,\mu_{N},\infty\})/ \mathbb{F}_{q})$.
\newline\indent In part II \cite{II-1} \cite{II-2} \cite{II-3}, we have deduced from these formulas a version of the motivic Galois theory of $p$MZV$\mu_{N}$'s formulated in terms of series instead of being formulated as usual in terms of integrals.
\newline\indent In part III \cite{III-1} \cite{III-2}, we have defined and studied a generalization of the notion of $p$MZV$\mu_{N}$'s at roots of unity of order divisible by $p$.
\newline\indent In all the previous parts, our results led us to replace the $p$MZV$\mu_{N}$'s by some variants, equivalent to them in a certain sense, which we call the adjoint $p$-adic multiple zeta values at $N$-th roots of unity (Definition \ref{def adjoint MZV}), abbreviated Ad$p$MZV$\mu_{N}$'s. Indeed, the results of \cite{I-2} \cite{I-3} show that the Ad$p$MZV$\mu_{N}$'s are more directly adapted to explicit computations than the $p$MZV$\mu_{N}$'s. The main objects in the previous parts were some group actions called $p$-adic pro-unipotent harmonic actions found in \cite{I-2} and \cite{I-3}.
\newline\indent The explicit formulas for $p$MZV$\mu_{N}$'s found in part I appear as relations between Ad$p$MZV$\mu_{N}$'s and the following numbers called weighted multiple harmonic sums, with $m \in \mathbb{N}^{\ast}$, $d \in \mathbb{N}^{\ast}$ $n_{1},\ldots,n_{d} \in \mathbb{N}^{\ast}$, $j_{1},\ldots,j_{d+1} \in \mathbb{Z}/N\mathbb{Z}$ \footnote{For complex or $p$-adic MZV$\mu_{N}$'s, $\big((n_{i});(\tilde{\xi}_{N}^{j_{i}})\big)_{d}$ is an abbreviation of $\big( \begin{array}{cc} \xi_{N}^{j_{1}},\ldots,\xi_{N}^{j_{d}} \\ n_{1},\ldots,n_{d} \end{array} \big)$, whereas for multiple harmonic sums, it is an abbreviation of $\big( \begin{array}{cc} \xi_{N}^{j_{1}},\ldots,\xi_{N}^{j_{d+1}} \\ n_{1},\ldots,n_{d} \end{array} \big)$} :
$$ \har_{m} \big((n_{i});(\tilde{\xi}_{N}^{j_{i}})\big)_{d} = \sum_{0<m_{1}<\ldots<m_{d}<m} \frac{\big( \frac{\xi_{N}^{j_{2}}}{\xi_{N}^{j_{1}}}\big)^{m_{1}} \ldots \big( \frac{\xi_{N}^{j_{d+1}}}{\xi_{N}^{j_{d}}}\big)^{m_{d}}\big(\frac{1}{\xi_{N}^{j_{d+1}}}\big)^{m}}{m_{1}^{n_{1}} \ldots m_{d}^{n_{d}}} \in \mathbb{Q}(\xi) $$

\subsection{Motivation for part IV}

The MZV's of depth one, i.e. the numbers (\ref{eq:multizetas}) with $N=1$ and $d=1$, are the values of Riemann's zeta function at positive integers, and special values of other classical functions appearing in analytic number theory. These functions have generalizations which depend on any $d\in \mathbb{N}^{\ast}$ variables, defined some iterated series as in (\ref{eq:multizetas}), as well as generalizations "at roots of unity" taking into account the numerators in the iterated series of (\ref{eq:multizetas}).
\newline Since we constructed in part I, and studied in parts II and III, a $p$-adic analogue of the series expansions of MZV$\mu_{N}$'s, we want to know if $p$-adic analogues of such interpolating functions exist.

\subsection{Motivation for parts IV-1}

We take as starting point of this paper the most straightforward example of interpolation of the MZV$\mu_{N}$'s ; namely, the multiple zeta functions at $N$-th roots of unity (MZF$\mu_{N}$'s) : for any $j_{1},\ldots,j_{d} \in \mathbb{Z}/N\mathbb{Z}$ :
$$ (s_{1},\ldots,s_{d} \big) \in U_{d} \mapsto \sum_{0<m_{1}<\ldots<m_{d}} \frac{\big( \frac{\tilde{\xi}_{N}^{j_{2}}}{\tilde{\xi}_{N}^{j_{1}}}\big)^{m_{1}} \ldots \big( \frac{1}{\tilde{\xi}_{N}^{j_{d}}}\big)^{m_{d}}}{m_{1}^{s_{1}} \ldots m_{d}^{s_{d}}} \in \mathbb{C} $$
\noindent where $U_{d}= \{(s_{1},\ldots,s_{d}) \in \mathbb{C}^{d} \text{ }|\text{ }\text{Re}(s_{d-r+1}+\ldots+s_{d})> r\text{ for all } r=1,\ldots,d\}$. We want to know whether these functions have natural $p$-adic analogues interpolating $p$MZV$\mu_{N}$'s and, if this is the case, we want to study them.
\newline \indent
The MZF$\mu_{N}$'s have a meromorphic continuation to $\mathbb{C}^{d}$, defined in \cite{Essouabri}, \cite{Matsumoto}, \cite{Zhao}, \cite{AET}, \cite{Go} for $N=1$, and in \cite{FKMT1} for any $N$. Their meromorphic continuation has singularities along certain hyperplanes, which are identified the most precisely in \cite{AET} and \cite{FKMT1}.
\newline One can then define values of these functions at tuples of integers of any sign, i.e. $\zeta\big(\begin{array}{c} \tilde{\xi}_{N}^{j_{1}},\ldots,\tilde{\xi}_{N}^{j_{d}} \\ n_{1},\ldots,n_{d} \end{array} \big)$, for any $n_{1},\ldots,n_{d} \in \mathbb{Z}$ ; this requires to remove a singularity. This can be done by considering the limit at tuples of integers along a certain direction \cite{AET} \cite{AT} \cite{Komori} \cite{O} \cite{Sa1} \cite{Sa2}, or by a certain "renormalization" process \cite{GZ}, \cite{MP} \cite{GPZ}, or by a certain "desingularization" process \cite{FKMT1}.
\newline We note that the question of defining numbers $\zeta\big((n_{i}),(\tilde{\xi}_{N}^{j_{i}})\big)$, for any $n_{1},\ldots,n_{d} \in \mathbb{Z}$ does not necessarily involves the meromorphic continuation of MZF$\mu_{N}$'s ; one can consider it only via the formulas of equation (\ref{eq:multizetas}) : we have to find a correct notion of regularizations, either for a generalization of the expression of MZV$\mu_{N}$'s as iterated series :
\begin{equation} \label{eq: complex limit series} \zeta_{\text{reg},\Sigma}\big( (\pm n_{i}),(\tilde{\xi}_{N}^{j_{i}}) \big) = \reg \lim_{m \rightarrow \infty} \sum_{0<m_{1}<\ldots<m_{d}<m} \frac{\big( \frac{\tilde{\xi}_{N}^{j_{2}}}{\tilde{\xi}_{N}^{j_{1}}}\big)^{m_{1}} \ldots \big( \frac{1}{\tilde{\xi}_{N}^{j_{d}}}\big)^{m_{d}}}{m_{1}^{\pm n_{1}}\ldots m_{d}^{\pm n_{d}}} 
\end{equation}
\noindent or for a generalization of the expression of MZV$\mu$s as iterated integrals, which amounts to :
\begin{equation} \label{eq: complex limit integrals}
\zeta_{\text{reg},\smallint}\big((\pm n_{i}),(\tilde{\xi}_{N}^{j_{i}})\big) = \reg\lim_{z \rightarrow 1} \sum_{0<m_{1}<\ldots<m_{d}} \frac{\big( \frac{\tilde{\xi}_{N}^{j_{2}}}{\tilde{\xi}_{N}^{j_{1}}}\big)^{m_{1}} \ldots \big( \frac{z}{\tilde{\xi}_{N}^{j_{d}}}\big)^{m_{d}}}{m_{1}^{\pm n_{1}}\ldots m_{d}^{\pm n_{d}}} 
\end{equation}
\noindent In the end, there are several notions of $\zeta\big((n_{i});(\tilde{\xi}_{N}^{j_{i}})\big)_{d}$ for any $n_{1},\ldots,n_{d} \in \mathbb{Z}$, interrelated in various ways. We would like to know if there are natural $p$-adic analogues of these values.
\newline\indent
The $p$-adic zeta function of Kubota and Leopoldt, which we will denote by $L_{p}$, is defined as a $p$-adic interpolation of the desingularized values of the Riemann zeta function at positive integers, using Kummer's congruences. Coleman has proved in \cite{Coleman} that, for all $n \in \mathbb{N}^{\ast}$ such that $n \geq 2$, we have $\zeta_{p,1}(n) = p^{n} L_{p}(n,\omega^{1-n})$, where $\omega$ is Teichm\"{u}ller's character, and $\zeta_{p,1}$ refers to $p$MZV$\mu_{N}$'s as denoted in \S1.1. This implies that the map $n \in \mathbb{N}^{\ast} \subset \mathbb{Z}_{p} \mapsto p^{-n}\zeta_{p,1}(n) \in \mathbb{Q}_{p}$ is continuous with respect to $n$ on each class of congruence modulo $p-1$, and can be extended to a continuous function on $\mathbb{Z}_{p}$, except for the class of congruence $n \equiv 1 \mod p-1$, where this is true instead for $n \mapsto (n-1)\zeta_{p}(n)$. This property can be retrieved by the following formula, which is known, and is also a particular case of our formulas of part I for $p$MZV$\mu_{N}$'s :
\begin{equation} \label{eq:series expansion depth one} \zeta_{p,1}(n) = \frac{1}{n-1} \sum_{l\geq -1}{-n \choose l} B_{l} \sum_{0<m<p}\frac{p^{n+l-1}}{m^{n+l-1}}
\end{equation}
This gives hope that $p$MZV$\mu_{N}$'s of higher depth might have some $p$-adic continuity properties and might be interpolated by a continuous function. However, the case of depth one is particular : the Frobenius of $\pi_{1}^{\un,\crys}(\mathbb{P}^{1} - \{0,\mu_{N},\infty\})$ can be described as a relation between certain $p$-adic series indexed as $\sum_{0<m}$ and their variants restricted to $\sum_{\substack{0<m \\ p\nmid m}}$ ; in higher depth, the Frobenius is much more complicated.
\newline\indent In higher depth, actually, some generalizations of the Kubota-Leopoldt $L$-function, based on the desingularization of the meromorphic continuation of MZF$\mu_{N}$'s, have been defined \cite{FKMT2}. Some of their values at tuples of positive integers are expressed in terms of $p$-adic iterated integrals on $\mathbb{P}^{1} - \{0,\mu_{cp},\infty\}$, with $c \in \mathbb{N}^{\ast}$ prime to $p$ (\cite{FKMT2}, Theorem 3.41). 
However, studying the role of these functions in the question explained in \S1.3 goes beyond the scope of this paper.
\newline\indent In \cite{FKMT3}, some $p$MZV$\mu_{N}$'s at tuples of integers of any sign are defined in certain particular cases : the indices are $\big((n_{i});(\xi_{N}^{j_{i}})_{d}\big)$ for any $n_{1},\ldots,n_{d} \in \mathbb{Z}$, but  $\xi_{N}^{j_{1}}\not=1,\ldots,\xi_{N}^{j_{d}}\not=1$. 
In terms of our notation of \S1.1, they are generalizations of the values $\zeta_{p,-\infty}(w)$. Their definition relies on the theory of Coleman integration in the sense of \cite{Vologodsky}.

\subsection{Main ideas}

In the formula (\ref{eq:multizetas}), the exponent $n_{i}$ in the iterated series corresponds to the iteration $n_{i}-1$ times of the operator $f \mapsto \int f \frac{dz}{z}$ in the definition of the iterated integral. Replacing this integration operator by its inverse gives similar series (provided it converges) with $n_{i}$ possibly negative. This has been used in several papers, including in \cite{FKMT3}, and we also have used it in \cite{I-2}. We are going to use again this idea here, but more systematically. We will use the term "localization of $\pi_{1}^{\un,\DR}(\mathbb{P}^{1} - \{0,\mu_{N},\infty\})$" to refer to the inversion of the all the operators $f \mapsto \int f \omega$ with $\omega$ a differential form $\frac{dz}{z-x}$, $x \in \{0,\xi^{1},\ldots,\xi^{N}\}$, on $\mathbb{P}^{1} - \{0,\mu_{N},\infty\}$.
\newline\indent 
If we consider an iterated integral as in (\ref{eq:multizetas}) but on a variable path (in the sense of \cite{Chen}), instead of the path $[0,1] \rightarrow [0,1]$, $t \mapsto t$ which is implicit in (\ref{eq:multizetas}), we obtain functions called multiple polylogarithms \cite{Go}, characterized as solutions to a certain differential equation (Proposition-Definition \ref{prop connexion}). If we allow the inversion of integration operators, we will obtain "localized multiple polylogarithms", which will be $\mathbb{Q}(\xi)$-linear combinations of products of iterated integrals by algebraic functions.  This phenomenon already appeared implicitly in \cite{I-2}, \S4-\S5, via the map "loc" which was used to define what we called the $p$-adic pro-unipotent $\Sigma$-harmonic action. Here, this phenomenon will be studied intrinsically, in particular its $p$-adic aspects. By this phenomenon, all the numbers obtained by considering localized iterated integrals at tangential base-points remain in the same algebra of periods : they are certain $\mathbb{Q}(\xi)$-linear combinations of $p$MZV$\mu_{N}$'s.
\newline\indent 
The $p$MZV$\mu_{N}$'s are defined using the notion of Frobenius structure of a $p$-adic differential equation. Thus in order to look for a good meaning of a notion of $p$MZV$\mu_{N}$'s at sequences of integers of any sign, we should show a compatibility between the localization and the Frobenius structure. We will see that imposing this compatibility makes things actually simpler in the $p$-adic case than in the complex case.
\newline\indent Our idea is to replace the Frobenius by what we called the harmonic Frobenius in \cite{I-2}, and to use $p$-adic pro-unipotent harmonic actions. Although it is possible to define localized $p$-adic multiple polylogarithms by Coleman integration, their regularization at $z\rightarrow 1$ is not well-defined in general because they have a pole which is not logarithmic. This is the difficulty observed in \cite{FKMT3}. We will see how to avoid it by replacing the Frobenius by the harmonic Frobenius.
\newline\indent 
In part I, we have obtained formulas involving series, representing the $p$MZV$\mu_{N}$'s. However, both the domains of summation and the summands were functions of the indices $n_{d},\ldots,n_{1}$. Here, if we want to study these sums of series as functions of $n_{d},\ldots,n_{1}$, we will make some changes of variables giving domains of summation independent of $n_{d},\ldots,n_{1}$.

\subsection{Outline}

In \S2, we define the "localization" of  $\pi_{1}^{\un,\DR}(X)$ for $X$ equal to a punctured projective line over a field of characteristic zero and the $\KZ$ connection associated with it, on a neighborhood of $0$ (Definition \ref{def of the localization}). 
This incorporates a notion of localized multiple polylogarithms (Definition \ref{localized Li}).
\newline We define maps which encode different expressions of the "localized iterated integrals" in terms of algebraic functions and the iterated integrals (Proposition-Definition \ref{loc for Li}, Proposition-Definition \ref{loc for har n}).
\newline Then we define the analytic continuation of the localized multiple polylogarithms, in the complex setting (Definition \ref{analytic continuation loc MPL}) and in the $p$-adic setting (Definition \ref{p-adic continuation loc MPL}). The $p$-adic setting is applied to $\pi_{1}^{\un,\crys}(\mathbb{P}^{1} - \{0,\mu_{N},\infty\}/\mathbb{F}_{q})$.
\newline\indent In \S3 we review the notion of $p$MZV$\mu_{N}$'s (Definition \ref{MZV Deligne}, Definition \ref{MZV Coleman},  Definition \ref{MZV Coleman bis}), and the $p$-adic pro-unipotent $\Sigma$-harmonic action $\circ_{\har}^{\Sigma}$ of \cite{I-2}. We define some "localized" variants of $\circ_{\har}^{\Sigma}$ (Proposition-Definition \ref{loc action}) and the localized adjoint $p$MZV$\mu_{N}$'s (Definition \ref{def localized pMZV}).  In this new setting, the localized version of $\circ_{\har}^{\Sigma}$ defined in \cite{I-2} will be now viewed as "localized at the source", and we now have two other "localized" variants of $\circ_{\har}^{\Sigma}$, called, respectively, "localized at the target" and "localized at the source and target".
\newline\indent In \S4, we explain briefly why the properties from \cite{I-3} describing formulas for the iteration of the harmonic Frobenius can be generalized to the setting of \S3 (Proposition-Definition \ref{iter localized}).
\newline\indent In \S5, we bring together the localization built in \S3 and the algebraic relations satisfied by $p$MZV$\mu_{N}$'s, which we studied in part II \cite{II-1} \cite{II-2} \cite{II-3} ; and we show that the localized adjoint $p$MZV$\mu_{N}$'s satisfy a variant of the adjoint double shuffle relations defined in \cite{II-1} (Proposition \ref{localized adjoint quasi shuffle}).
\newline\indent The main theorem is a summary of the main properties of our localized $p$-adic multiple zeta values at roots of unity.
\newline We refer to the notion of adjoint double shuffle relations defined in \cite{II-1}.
\newline 
\newline \textbf{Theorem IV-1}
\newline \emph{i) (Nature of localized Ad$p$MZV$\mu_{N}$'s)
\newline The localized Ad $p$MZV$\mu_{N}$'s are in the $\mathbb{Q}(\xi_{N})$-algebra generated by $p$MZV$\mu_{N}$'s. In particular, they are periods of the crystalline pro-unipotent fundamental groupoid of $(\mathbb{P}^{1} - \{0,\mu_{N},\infty\})/\mathbb{F}_{q}$.
\newline The totally negative Ad $p$MZV$\mu_{N}$'s (in the sense of Definition \ref{def totally negative}) are algebraic numbers, in $\mathbb{Q}(\xi)$; more precisely, they are in an algebra of functions defined explicitly in terms of polynomials of Bernoulli numbers, or, alternatively, in terms of prime multiple harmonic sums at negative indices. The vanishing of the odd Bernoulli numbers imply the vanishing of certain particular totally negative Ad$p$MZV$\mu_{N}$'s.
\newline ii) (Formulas)
\newline The formulas of \cite{I-2} and \cite{I-3} for Ad$p$MZV$\mu_{N}$'s can be extended into explicit formulas for the localized Ad $p$MZV$\mu_{N}$'s, involving extensions of the $p$-adic pro-unipotent harmonic action $\circ_{\har}^{\Sigma}$ and the map of iteration of the harmonic Frobenius $\iter_{\har}^{\Sigma}$.
\newline iii) (Algebraic relations)
\newline The localized Ad $p$MZV$\mu_{N}$'s satisfy an extension of the adjoint double shuffle relations}
\newline 
\newline We consider these properties as a justification of our definition of the localized Ad$p$MZV$\mu_{N}$'s.
\newline We note that this theorem can be extended in an obvious way to the $p$-adic multiple zeta values at roots of unity of order divisible by $p$ defined in \cite{III-1}.
\newline\indent We will prove later that the formulas of \cite{I-2} can be modified to be formulas with domains of summation independent of $(l,(n_{i});(\xi_{N}^{j_{i}}))$.
\newline We will use this later to deduce that the adjoint $p$MZV's $\zeta_{p}^{\Ad}(l;n_{d},\ldots,n_{1})$  (here $N=1$) have some continuity properties with respect to $n_{1}$ and $n_{d}$ viewed as $p$-adic integers.
\newline In the next version of this paper, we will also define a notion of localized $p$MZV$\mu_{N}$'s, without the term adjoint. This will be done by using a generalization of the main equation found in \cite{AETbis}.
\newline 
\newline \textbf{Acknowledgments.} I thank Hidekazu Furusho for discussions.
\newline The idea of inverting integration operators being standard, they may be some references missing and I apologize if this is the case.
\newline This work has been achieved at Universit\'{e} de Strasbourg, supported by Labex IRMIA, and at Universit\'{e} de Genève, supported by NCCR SwissMAP.

\section{Localized complex and $p$-adic multiple polylogarithms}

We formalize a notion of localization on a neighborhood of 0 of the De Rham pro-unipotent fundamental groupoid of and the KZ connection (\S2.1), we define "localization maps" enabling to "compute" it (\S2.2) and we discuss the analytic continuation of the localized multiple polylogarithms (\S2.3) in the complex (\S2.3.1) and $p$-adic (\S2.3.2) settings. The $p$-adic aspects are restricted to the case of $\mathbb{P}^{1} - \{0,\mu_{N},\infty\}$.
 
\subsection{The De Rham pro-unipotent fundamental groupoid of $\mathbb{P}^{1} - \{0=z_{0},z_{1},\ldots,z_{r},\infty\}$, the KZ connection, and its localization on a neighborhood of the origin}

\subsubsection{Review on $\pi_{1}^{\un,\DR}(\mathbb{P}^{1} - \{0=z_{0},z_{1},\ldots,z_{r},\infty\})$ and $\nabla_{\KZ}$}

Let $K$ be a field of characteristic zero, $z_{0},\ldots,z_{r} \in K$ with $z_{0}=0$ and $z_{r}=1$, and 
$X= (\mathbb{P}^{1} - \{0,z_{1},\ldots,z_{r},\infty\})/K$.
\newline We review $\pi_{1}^{\un,\DR}(X)$ and the KZ connection (defined and described in \cite{Deligne}), and we define their "localized" version on a neighborhood of $0$.
\newline\indent The De Rham pro-unipotent fundamental groupoid of $X$, denoted by $\pi_{1}^{\un,\DR}(X)$ is a groupoid in pro-affine schemes over $X$ ; its base-points are the points of $X$, the points of punctured tangent spaces $T_{x} - \{0\}$, $x \in \{0,\xi^{1},\ldots,\xi^{N},\infty\}$ called tangential base-points (\cite{Deligne}, \S15), and the canonical base-point $\omega_{\DR}$ (\cite{Deligne}, (12.4.1)). 
\newline\indent All pro-affine schemes $\pi_{1}^{\un,\DR}(X_{K},y,x)$ are canonically isomorphic as schemes to $\pi_{1}^{\un,\DR}(X_{K},\omega_{\DR})$, these isomorphisms being compatible with the groupoid structure (\cite{Deligne}, \S12). Thus, describing the groupoid  $\pi_{1}^{\un,\DR}(X_{K})$ is reduced to describing $\pi_{1}^{\un,\DR}(X_{K},\omega_{\DR})$, which is done in the next statement.

\begin{Proposition-Definition} Let $\frak{e}$ be the alphabet $\{e_{0},e_{z_{1}},\ldots,e_{z_{r}}\}$, and let $\Wd(\frak{e})$ be the set of words over $\frak{e}$ (including the empty word).
\newline i) The shuffle Hopf algebra over $\frak{e}$, denoted by $\mathcal{O}^{\sh,\frak{e}}$, is the $\mathbb{Q}$-vector space $\mathbb{Q}\langle \frak{e} \rangle = \mathbb{Q}\langle e_{z_{0}},e_{z_{1}},\ldots,e_{z_{r}}\rangle$, which admits $\Wd(\frak{e})$ as a basis, with the following operations :
\newline a) the shuffle product $\sh:\mathcal{O}^{\sh,\frak{e}}\otimes \mathcal{O}^{\sh,\frak{e}} \rightarrow \mathcal{O}^{\sh,\frak{e}}$ defined by, for all words : $(e_{z_{i_{n+n'}}}\ldots e_{z_{i_{n+1}}})\text{ }\sh\text{ }(e_{z_{i_{n}}} \ldots e_{z_{i_{1}}}) =
\sum_{\sigma}
e_{z_{i_{\sigma^{-1}(n+n')}}} \ldots e_{z_{i_{\sigma^{-1}(1)}}}$ where the sum is over permutations $\sigma$ of $\{1,\ldots,n+n'\}$ such that $\sigma(n)<\ldots<\sigma(1)$ and $\sigma(n+n')<\ldots<\sigma(n+1)$.
\newline b) the deconcatenation coproduct $\Delta_{\dec} : \mathcal{O}^{\sh,\frak{e}}\rightarrow \mathcal{O}^{\sh,\frak{e}} \otimes \mathcal{O}^{\sh,\frak{e}}$, defined by, for all words :
$\Delta_{\dec}(e_{z_{i_{n}}}\ldots e_{z_{i_{1}}}) = \sum_{n'=0}^{n} e_{z_{i_{n}}}\ldots e_{z_{i_{n'+1}}} \otimes e_{z_{i_{n'}}} \ldots e_{z_{i_{1}}}$ 
\newline c) the counit $\epsilon : \mathcal{O}^{\sh,\frak{e}} \rightarrow \mathbb{Q}$ sending all non-empty words to $0$.
\newline d) the antipode $S : \mathcal{O}^{\sh,\frak{e}} \rightarrow \mathcal{O}^{\sh,\frak{e}}$, defined by, for all words : 
$S(e_{z_{i_{n}}}\ldots e_{z_{i_{1}}}) = (-1)^{l} e_{z_{i_{1}}}\ldots e_{z_{i_{n}}}$.
\newline ii) (\cite{Deligne}, \S12) The group scheme $\Spec(\mathcal{O}^{\sh,\frak{e}})$ is pro-unipotent and canonically isomorphic to $\pi_{1}^{\un,\DR}(X_{K},\omega_{\DR})$.
\end{Proposition-Definition}

\noindent Since $\Spec(\mathcal{O}^{\sh,\frak{e}})$ is pro-unipotent, its points can be expressed in a canonical way in terms of the dual of the topological Hopf algebra $\mathcal{O}^{\sh,\frak{e}}$. This is written in the next statement, in which, following a common abuse of notation, we denote in the same way the letters $e_{z_{j}}$ and their duals.

\begin{Proposition-Definition} \label{shuffle equation}
i) Let $K\langle\langle  \frak{e}\rangle\rangle = K \langle\langle e_{z_{0}},\ldots,e_{z_{r}} \rangle\rangle$ be the non-commutative $K$-algebra of power series over the variables $e_{z_{0}},\ldots,e_{z_{r}}$ with coefficients in $K$. It is the completion of the universal enveloping algebra of the complete free Lie algebra over the variables $e_{z_{0}},\ldots,e_{z_{r}}$. It thus has a canonical structure of topological Hopf algebra.
\newline We write an element $f \in K\langle \langle \frak{e} \rangle\rangle$ as $f = \sum_{w \in \Wd(\frak{e})} f[w]w$,  where $f[w] \in K$ for all $w$. We have 
\begin{multline} \label{eq:shuffle equation modulo products}
\begin{array}{ll}
\Lie(\mathcal{O}^{\sh,\frak{e}})^{\vee}\otimes_{\mathbb{Q}} K & = \{ f \in K \langle\langle \frak{e} \rangle\rangle \text{ }|\text{ }\forall w\not=\emptyset,w'\not=\emptyset \in \Wd(\frak{e}), f[w\text{ }\sh\text{ }w']=0\} 
\\ & = 
\{\text{ primitive elements of } K \langle\langle \frak{e} \rangle\rangle \}
\end{array}
\end{multline}
The equation above is called the shuffle equation modulo products.
\newline ii) We have a canonical isomorphism of topological Hopf algebras $(\mathcal{O}^{\sh,\frak{e}} \otimes_{\mathbb{Q}} K)^{\vee} = K \langle \langle \frak{e} \rangle\rangle$ and
\begin{multline} \label{eq:shuffle equation} 
\begin{array}{ll} \Spec(\mathcal{O}^{\sh,\frak{e}})(K) & =
\{ f \in K \langle\langle \frak{e} \rangle\rangle \text{ }|\text{ }\forall w,w' \in \Wd(\frak{e}), f[w\text{ }\sh\text{ }w']=f[w]f[w'],\text{ and }f[\emptyset] = 1 \}
\\ & = \{\text{ grouplike elements of } K \langle\langle \frak{e} \rangle\rangle\}
\end{array}
\end{multline}
The equation above is called the shuffle equation.
\end{Proposition-Definition}

\noindent We now review the connection $\nabla_{\KZ}$ associated with $\pi_{1}^{\un,\DR}(X_{K})$ and multiple polylogarithms, viewed first as power series.

\begin{Proposition-Definition} (follows from \cite{Deligne}, \S7.30 and \S12) \label{prop connexion}
\newline i) The connection associated with $\pi_{1}^{\un,\DR}(X)$, called the Knizhnik-Zamolodchikov (for short, KZ) connection of $X$, is the connection on $\pi_{1}^{\un,\DR}(X,\omega_{\DR}) \times X$ defined by
$\nabla_{\KZ} : f \mapsto df - \big(\sum_{i=0}^{r} e_{z_{i}} f \frac{dz}{z-z_{i}} \big)f$.
\newline ii) The coefficients of its horizontal sections are iterated integrals of $\frac{dz}{z-z_{j}}$, $j=0,\ldots,r$(in the sense of Chen \cite{Chen} if $K\hookrightarrow\mathbb{C}$, and in the sense of Coleman \cite{Coleman} if $K\hookrightarrow \mathbb{C}_{p}$ is unramified), and called multiple polylogarithms \cite{Go}.
\newline Assume $K$ is embedded in $\mathbb{C}$ or $\mathbb{C}_{p}$ for $p$ a prime number. For $d \in \mathbb{N}^{\ast}$, $n_{1},\ldots,n_{d} \in \mathbb{N}^{\ast}$, $j_{1},\ldots,j_{d} \in \{1,\ldots,r\}$, let $\Li \big((n_{i});(z_{j_{i}})\big)_{d} \in K[[z]]$ be the formal iterated integral of the sequence of differential forms $(\underbrace{\frac{dz}{z},\ldots,\frac{dz}{z}}_{n_{d}-1},\frac{dz}{z-z_{j_{d}}},\ldots,\underbrace{\frac{dz}{z},\ldots,\frac{dz}{z}}_{n_{1}-1},\frac{dz}{z-z_{j_{1}}})$. Then, for $z\in K$ such that $|z|<1$, we have :
\begin{equation} \Li^{0} \big((n_{i});(z_{j_{i}})\big)_{d}(z) =  \sum_{0<m_{1}<\ldots<m_{d}} \frac{\big( \frac{z_{j_{2}}}{z_{j_{1}}}\big)^{n_{1}} \ldots \big( \frac{z}{z_{j_{d}}}\big)^{n_{d}}}{m_{1}^{n_{1}} \ldots m_{d}^{n_{d}}} \in K 
\end{equation}
\end{Proposition-Definition}

\subsubsection{Localization of $(\pi_{1}^{\un,\DR}(\mathbb{P}^{1} - \{0=z_{0},z_{1},\ldots,z_{r},\infty\}),\nabla_{\KZ})$ on a neighborhood of zero}

We now formalize the localization on a neighborhood of $0$ of $\pi_{1}^{\un,\DR}(X_{K}),\nabla_{\KZ}$ (Definition \ref{def of the localization}).
\newline \indent Let $A$ be a ring, and $S$ a multiplicative subset of $A$. The localization of $A$ at $S$ is the ring $A S^{-1}$ representing the subfunctor of $\Hom(A,-)$ defined by the homomorphisms mapping $S$ to units. Explicitly, $A S^{-1}$ is the ring whose elements are sums of elements of the form $x_{1}y_{1}^{-1}x_{2}y_{2}^{-1}\ldots x_{i}y_{i}^{-1}$, with $x_{i} \in A$, $y_{i} \in S$. The representability of the functor above is granted because it is continuous and satisfies the solution set condition. This notion is mostly usual when $A$ is commutative, or if the $(A,S)$ satisfies Ore's conditions, which are a weak variant of the commutativity assumption.
\newline \indent For us, localizing $\pi_{1}^{\un,\DR}(X)$ will mean replacing $\mathcal{O}^{\sh,\frak{e}}$, the Hopf algebra of $\pi_{1}^{\un,\DR}(X_{K},\omega_{\DR})$, regarded as a ring whose multiplication is the concatenation of words, by its localization at the part of non-zero elements (which is multiplicative because it is an integral ring). We define a ring which will have a surjection onto the localization ; this will be practical for writing some results.

\begin{Definition} Let $\frak{e}^{\inv}$ be the alphabet $\{e_{0}^{\inv},e^{\inv}_{z_{1}},\ldots,e^{\inv}_{z_{r}}\}$.
\newline Let $\frak{e} \cup \frak{e}^{\inv}$ be the alphabet $\{e_{0},e_{z_{1}},\ldots,e_{z_{r}},e_{0}^{\inv},e^{\inv}_{z_{1}},\ldots,e^{\inv}_{z_{r}}\}$.
\newline Let $\Wd(\frak{e} \cup \frak{e}^{\inv})$ be the set of words over $\frak{e} \cup \frak{e}^{\inv}$.
\newline Let $K \langle\langle \frak{e} \cup \frak{e}^{\inv} \rangle\rangle$ the non-commutative $K$-algebra of formal power series over the variables equal to the letters of $\frak{e} \cup \frak{e}^{\inv}$.
\end{Definition}

\noindent It is convenient to reformulate the KZ equation $\nabla_{\KZ}(L)=0$ as a fixed-point equation :

\begin{Definition} \label{definition int KZ power series}Let the integration operator $K[[z]][\log(z)]\langle\langle \frak{e} \rangle\rangle \rightarrow K[[z]][\log(z)]\langle\langle \frak{e} \rangle\rangle$ :
$$ \Int_{\KZ} : L \mapsto  \int_{\vec{1}_{0}}^{z} (\frac{dz'}{z'} e_{0} + \frac{dz'}{z'-1}e_{1})L $$
\end{Definition}

\noindent Let $L \in K[[z]][\log(z)]\langle \langle \frak{e} \rangle\rangle$ whose coefficient of $z^{0}$ is $\exp(e_{0} \log(z))$. We have the equivalence 
$\nabla_{\KZ}(L) = 0 \Leftrightarrow \Int_{\KZ}(L) = L$. These conditions are also equivalent to saying that $L$ is the non-commutative generating series of multiple polylogarithms in the sense of Proposition-Definition \ref{prop connexion}. This way to formulate the KZ equation gives rise to the following definition of its localized variant :

\begin{Definition} \label{nabla KZ localized}
Let 
$$ \Int^{\loc}_{\KZ} : L \mapsto \bigg( e_{0}^{\inv} z \frac{d}{dz} + e_{1}^{\inv}(z-1) \frac{d}{dz} \bigg) L + \int_{\vec{1}_{0}}^{z} (\frac{dz'}{z'} e_{0} + \frac{dz'}{z'-1}e_{1}) L $$
\noindent We say that the equation $\Int_{\loc}^{\KZ}(L)=L$ is the localized version of the equation $\nabla_{\KZ}(L)=0$.
\end{Definition}

\begin{Definition} \label{def of the localization} The localization on a neighborhood of $0$ of $\pi_{1}^{\un,\DR}(X_{K}),\nabla_{\KZ}$ is the data of the $K$-algebra $K\langle\langle \frak{e} \cup \frak{e}^{\inv} \rangle\rangle$, the inclusion $\pi_{1}^{\un,\DR}(X,\omega_{\DR})(K) \subset K\langle\langle \frak{e} \cup \frak{e}^{\inv} \rangle\rangle$, and the operator $\Int^{\loc}_{\KZ}$.
\end{Definition}

\begin{Proposition-Definition} \label{localized Li} i) The localized KZ equation has a unique solution $\Li^{\loc}_{0}$ such that $L \in K[[z]][\log(z)]\langle \langle \frak{e} \cup \frak{e}^{\inv} \rangle\rangle$ whose coefficient of $z^{0}$ is $\exp(e_{0} \log(z))$.
\newline We call localized $p$-adic multiple polylogarithms the coefficients $\Li_{0}^{\loc}[w] \in K[[z]][\log(z)]$.
\newline ii) $\Li_{0}^{\loc}$ viewed as an element of $K \langle\langle \frak{e} \cup \frak{e}^{\inv} \rangle\rangle$ descends to an element of the $(K\langle\langle \frak{e} \rangle\rangle - \{0\})^{-1}K\langle\langle \frak{e} \cup \frak{e}^{\inv} \rangle\rangle$, and further to $(K\langle\langle \frak{e} \rangle\rangle - \{0\})^{-1}K\langle\langle \frak{e} \rangle\rangle/I_{\comm}$, where $I_{\comm}$ is the ideal generated by the relations $e_{z} e_{z}^{\inv} = e_{z}^{\inv}e_{z}=1$.
\end{Proposition-Definition}

\begin{proof} Clear.
\end{proof}

\noindent The next statement is a generalization of the expression of the power series expansions of multiple polylogarithms in terms of multiple harmonic sums (\cite{Go}, equation (1)) :

\begin{Proposition-Definition} \label{loc mhs} i) We call localized multiple harmonic sums the following numbers, for $d \in \mathbb{N}^{\ast}$ $n_{1},\ldots,n_{d} \in \mathbb{Z}$, $j_{1},\ldots,j_{d+1} \in \mathbb{Z}/N\mathbb{Z}$, $m \in \mathbb{N}^{\ast}$ :
$$ \frak{h}_{m} \big((n_{i});(\tilde{\xi}_{N}^{j_{i}})\big)_{d} =  \sum_{0<m_{1}<\ldots<m_{d}<m} \frac{\big( \frac{z_{j_{2}}}{z_{j_{1}}}\big)^{m_{1}} \ldots \big( \frac{z_{j_{d+1}}}{z_{j_{d}}}\big)^{m_{d}}
\big(\frac{1}{z_{j_{d+1}}}\big)^{m} }{m_{1}^{n_{1}} \ldots m_{d}^{n_{d}}} $$ 
\noindent and weighted localized multiple harmonic sums the numbers 
$$ \har_{m} \big((n_{i});z_{j_{i}})\big)_{d} = m^{n_{1}-\tilde{n}_{1}+\ldots+n_{d}-\tilde{n}_{d}} \frak{h}_{m} \big((n_{i});(\xi_{N}^{j_{i}})\big)_{d} $$
\noindent ii) Assume that $K$ is embedded in $\mathbb{C}$ or in $\mathbb{C}_{p}$.
For all $n_{d},\tilde{n}_{d},\ldots,n_{1},\tilde{n}_{1} \in \mathbb{N}^{\ast}$, $j_{1},\ldots,j_{d} \in \mathbb{Z}/N\mathbb{Z}$, for all $z \in K$, $|z|<1$, the series below is absolutely convergent and we have :
$$ \Li_{0}^{\loc}\big[ e_{0}^{n_{d}-1}(e_{0}^{\inv})^{\tilde{n}_{d}}e_{z_{j_{d}}} \ldots e_{0}^{n_{1}-1}(e_{0}^{\inv})^{\tilde{n}_{1}}e_{z_{j_{1}}} \big] 
= \sum_{0<m_{1}<\ldots<m_{d}} \frac{\big( \frac{z_{j_{2}}}{z_{j_{1}}}\big)^{m_{1}} \ldots \big( \frac{z}{z_{j_{d}}}\big)^{m_{d}}}{m_{1}^{n_{1}-\tilde{n}_{1}} \ldots m_{d}^{n_{d}-\tilde{n}_{d}}} $$ 
\noindent The coefficients of $\Li^{\loc}_{0}[w]$ with $w$ of the form $\tilde{w}e_{z}^{\inv}$, $z \in \{z_{0},z_{1},\ldots,z_{r}\}$, are equal to $0$.
\end{Proposition-Definition}

\subsection{Computation of the localization}

We define two "localization maps", expressing the localization of $(\pi_{1}^{\un,\DR}(\mathbb{P}^{1} - \{0=z_{0},z_{1},\ldots,z_{r},\infty\}),\nabla_{\KZ})$ on a neighborhood of zero, in terms of iterated integrals and algebraic functions. For certain statements, we restrict for simplicity the study to the localization at the multiplicative part generated by $e_{0}$.

\subsubsection{The localization map for multiple polylogarithms $\loc^{\smallint}$}

We write the coefficients of $\Li^{\KZ}_{\loc}$ as $\mathbb{Q}$-linear combinations of products of algebraic functions on $\mathbb{P}^{1} - \{0,\mu_{N},\infty\}$ by coefficients $\Li^{\KZ}_{\loc}[w]$ with $w \in \mathcal{W}(e_{X_{K}})$.
In the next statement, we view $\Li_{\loc}$ as a map $\mathcal{O}^{\sh,e}_{\loc} \rightarrow \mathbb{C}[[z]]$, and we view $\Li$ as a map 
$\mathcal{O}^{\sh,e} \rightarrow \mathbb{C}[[z]]$.

\begin{Proposition-Definition} \label{loc for Li} Assume $-1 \in \{z_{1},\ldots,z_{r}\}$ (otherwise replace $X$ by $X'=(\mathbb{P}^{1} - \{0,z_{1},\ldots,z_{r},-1,\infty\})/K$). There exists a map
$$ \loc^{\smallint} : \mathcal{O}^{\sh,e}_{\loc} \rightarrow 
\Gamma(X,\mathcal{O}_{X}) \otimes \mathcal{O}^{\sh,e} $$
such that we have
$$ \Li_{0}^{\loc} = (\Li^{0} \otimes \Gamma(X,\mathcal{O}_{X})) \circ \loc^{\smallint} $$
\end{Proposition-Definition}

\begin{proof} Let us call weight the number of letters of a word $w$ over the alphabet $\frak{e} \cup \frak{e}^{\inv}$. By induction on the weight, we are reduced to prove that for $x \in \{0,z_{1},\ldots,z_{n}\}$, and $w$ a word over $\frak{e}$, and let $n \in \mathbb{N}^{\ast}$. 
Then $\int_{0}^{z} \frac{dz'}{(z'-x)^{n}}\Li[w](z')$ is a $\Gamma(X,\mathcal{O}_{X})$-linear combination of multiple polylogarithms.
\newline If $x \not=0$, we write 
$\frac{1}{(z'-x)^{n}} = \frac{1}{(-x)^{n}}(\frac{1}{1-\frac{z'}{x}})^{n} =\frac{1}{(-x)^{n}}\sum_{l\geq 0} {-n \choose l} \big(\frac{z'}{x}\big)^{l} = \frac{1}{(-x)^{n}}\sum_{l\geq 0} (-1)^{l} {l+n-1 \choose l} \big(\frac{z'}{x}\big)^{l}$. We use that ${l+n-1 \choose l}$ is a polynomial function of $l$.
\newline If $x=0$, integration by parts the KZ equation and induction on the weight reduces us to the case of $w$ is of weight $1$, in which case $\Li[w](z')=\log(z'-x')$ with $x' \in \{0,z_{1},\ldots,z_{r}\}$ ; the result is then proved by another integration by parts.
\end{proof}

\subsubsection{The localization map for multiple harmonic sums $\loc^{\Sigma}$}

\noindent We review from \cite{I-2} the map $\loc^{\Sigma}$ giving an expression of the localized multiple harmonic sums (in the sense of Definition \ref{loc mhs}) in terms of multiple harmonic sums and some polynomials of the upper bound of the domain of iterated summation.

\begin{Definition} \label{definition of connected partition}Let $S$ be a subset of $\mathbb{N}$.
\noindent\newline i) A connected partition of $S$ is a partition of $S$ into segments.
\newline ii) An increasing connected partition of $S$ is a connected partition of $S$ with an order $<$ on the corresponding set of parts of $S$, such that if $C<C'$, we have $j<j'$ in $\mathbb{N}$ for all $j \in C$ and $j' \in C'$.
\newline iii) For a part $P$ of $\{1,\ldots,d\}$ and an increasing connected partition of $P$ and the connected components of $\{1,\ldots,d\} - P$.
\newline iv) Let $\partial S$ be the set of $x \in S$ such that $x+1 \not\in S$ or $x-1 \not\in S$.
\end{Definition}

\noindent We apply the previous definitions to define a way to represent localized words which is adapted to our purposes.

\begin{Definition} \label{la definition du localise}Let $w =\big(t_{d},\ldots,t_{1}) \in \mathbb{Z}^{d}$.
\newline i) Let $\Sign^{-}(w)= \{i \in \{1,\ldots,d\} \text{ | } t_{i}<0\}$, and $\Sign^{+}(w) = \{ i \in \{1,\ldots,d\} \text{ | } t_{i} \geq 0\}$.
\newline ii) Let $r(w) \in \mathbb{N}$ be the number of connected components of $\Sign^{-}(w)$ in the sense of Definition \ref{definition of connected partition}, and we denote these connected components by $[I_{1}(w)+1,J_{1}(w)-1],\ldots,[I_{r(w)}(w)+1,J_{r(w)}(w)-1]$, with $I_{1}(w)<J_{1}(w)<I_{2}(w)<J_{2}(w)<\ldots < I_{r(w)}(w)<J_{r(w)}(w)$. We also write $J_{0} = 0$ and $I_{r(w)+1} = d+1$.
\newline iii) Let us write $t_{i} = n_{i}$ if $t_{i}>0$, and $t_{i} = -l_{i}$ if $t_{i} \geq 0$.
\end{Definition}

For technical reasons which will appear in \S3, we are actually going to replace the localized multiple harmonic sums (Proposition-Definition \ref{loc mhs}) by a variant whose domain of summation involves both strict and large inequalities. Indeed, the following variant of multiple harmonic sums appears in a natural way in the computation on $p$MZV$\mu_{N}$'s.

\begin{Definition} We take the notations of Proposition-Definition \ref{loc mhs} and Definition \ref{la definition du localise} ; let 
\begin{equation}
\label{eq:def tilde multiple harmonic sums}
\widetilde{\frak{h}}_{m}\big( (n_{i});(\xi^{j_{i}})\big)_{d}= \sum_{(m_{1},\ldots,m_{d}) \in \tilde{\Delta}_{w}} \frac{{\big( \frac{\xi^{j_{2}}}{\xi^{j_{1}}}\big)^{m_{1}} \ldots \big( \frac{1}{\xi^{j_{d}}}\big)^{m_{d}}}}{m_{1}^{n_{1}} \ldots m_{d}^{n_{d}}}
\end{equation}
where $w=\big( (n_{i});(\xi^{j_{i}})\big)_{d}$ and
$$ \tilde{\Delta} = \{ (m_{1},\ldots,m_{d}) \in \mathbb{N}^{d}\text{ }|\text{ }0\leq m_{I_{1}(w)} < \ldots < m_{I_{2}(w)-1}\leq m_{I_{2}(w)}<m_{J_{2}(w)} <\ldots \leq m_{I_{r(w)}(w)} < \ldots < m \} $$
\end{Definition}

\begin{Proposition-Definition} \label{numbers mathcal B}There exists a unique sequence $(\mathcal{B}^{\delta,\delta'}_{(m_{i});(\xi^{j_{i}})})$ of elements of $\mathbb{Q}(\xi)$ such that, for all, $m,m'$ we have
\begin{equation} \label{eq:recurrence} \sum_{m<m_{i}<\ldots<m_{j}<m'} \big( \frac{\tilde{\xi}_{N}^{j_{2}}}{\tilde{\xi}_{N}^{j_{1}}}\big)^{m_{1}} \ldots \big( \frac{1}{\tilde{\xi}_{N}^{j_{d}}}\big)^{m_{d}} m_{1}^{l_{1}}\ldots m_{d}^{l_{d}} =
\sum_{\delta,\delta'=0}^{l_{1}+\ldots+l_{d}+d+1}\mathcal{B}^{\delta,\delta'}_{(m_{i});(\xi^{j_{i}})} m^{\delta}m'^{\delta'} 
\end{equation}
\end{Proposition-Definition}

\begin{proof}The existence of these coefficients as well as formulas for them can be obtained by induction on $d$, and by considering the two following equalities, valid for $l \in \mathbb{N}^{\ast}$ : $\sum_{m_{1}=0}^{m-1} m_{1}^{l} =\sum_{\delta=0}^{l+1} \frac{1}{l+1}{l+1 \choose \delta} B_{l+1-\delta}T^{\delta}$, and $\sum_{m_{1}=0}^{m-1} m_{1}^{l}T^{m_{1}}= (T\frac{d}{dT})^{l}(\sum_{m_{1}=0}^{m} T^{m_{1}}) = (T\frac{d}{dT})^{l}( \frac{T^{m}-1}{T-1})$.
\end{proof}

\begin{Proposition-Definition} \label{loc for har n}Let the map 
$$ \loc^{\Sigma} : \mathcal{O}^{\sh,\frak{e}}_{\loc} \longrightarrow  \mathcal{O}^{\sh,\frak{e}} \otimes \mathbb{Q}(\xi)[\frak{m}] $$
\noindent defined recursively as follows : let $[i_{C},j_{C}]$ be a connected component of $\Sign^{-}(w)$. Then applying equation (\ref{eq:recurrence}) gives an expression of the form $\widetilde{\frak{h}}_{m}(w) = \sum_{w'} \frak{h}_{m}(w') P_{w'}(m)$ with, for all $w'$, $depth(w')<depth(w)$.
We define $\loc(w)$ as $\sum_{w'} \loc(w') (1 \otimes P_{w'})$.
Then, we have :
$$ \widetilde{\frak{h}}_{m}(w) = (\frak{h}(m) \times \eval_{m}) (\loc(w)) $$
\noindent where $\frak{h}(m) \times \eval_{m}$ is defined as $\frak{h}(m) \otimes \eval_{m}$ composed with the multiplication of tensor components.
\end{Proposition-Definition}

\noindent In other terms, a localized multiple harmonic sum $\widetilde{\frak{h}}_{m}(w)$ is a $\mathbb{Q}(\xi)$-linear combination of products of (non-localized) multiple harmonic sums by polynomials of $m$.

\begin{Example} Below, $l_{1},l_{2} \in \mathbb{N}$, and $n_{1},n_{2} \in \mathbb{N}^{\ast}$.
\newline i) Depth one and $N=1$ : 
$\tilde{\frak{h}}_{m}(-l_{1})= \sum_{\delta_{1}=1}^{l_{1}+1} \mathcal{B}_{m}^{l_{1}} m^{\delta_{1}} $
\newline ii) Depth two and $N=1$ :  $\tilde{\frak{h}}_{m}(-l_{2},-l_{1}) = \sum_{\delta=1}^{l_{1}+l_{2}+2} \mathcal{B}_{\delta}^{l_{2},l_{1}} m^{\delta} \displaystyle$
\noindent\newline 
$\frak{h}_{m}(n_{2},-l_{1}) = \left\{
\begin{array}{ll} \sum_{\delta_{1}=1}^{l_{1}+1} \mathcal{B}_{\delta_{1}}^{l_{1}} \frak{h}_{m}(n_{2}-\delta_{1}) \text{ if } l_{1}+1 \leq n_{2} 
\\ \sum_{\delta_{1}=1}^{n_{2}-1} \mathcal{B}_{\delta_{1}}^{l_{1}} \frak{h}_{m}(n_{2}-\delta_{1}) 
+ \sum_{\tilde{\delta}_{1}=0}^{l_{1}-n_{2}+1}
\sum_{\delta_{2}=1}^{\delta_{1}-n_{2}+1}
\mathcal{B}_{\delta_{1}}^{l_{1}}\mathcal{B}_{\delta_{2}}^{\delta_{1}-n_{2}} m^{\delta_{2}} \text{ if } l_{1}+1 > n_{2}
\end{array} \right.$
\newline 
$\tilde{\frak{h}}_{m}(-l_{2},n_{1}) =
\left\{\begin{array}{ll} 
\tilde{\frak{h}}_{m}(-l_{2})\mathcal{H}_{m}(n_{1}) - \sum_{\delta_{2}=1}^{l_{2}+1} \mathcal{B}_{\delta}^{l_{2}} \frak{h}_{m}(n_{1}-\delta_{2}) \text{ if } l_{2}+1 < n_{1}
\\ \tilde{\frak{h}}_{m}(-l_{2})\frak{h}_{m}(n_{1})
- \sum_{\delta_{2}=1}^{n_{1}-1} \mathcal{B}_{\delta_{2}}^{l_{1}} \frak{h}_{m}(n_{1}-\delta_{2}) -
\sum_{\tilde{\delta}_{2}=0}^{l_{2}-n_{1}+1}\sum_{\delta_{1}=1}^{\tilde{\delta}_{2}+1} \mathcal{B}_{\tilde{\delta}_{2}+n_{1}}^{(l_{2}-n_{1})+n_{1}} \mathcal{B}_{\delta_{1}}^{\tilde{\delta}_{2}} m^{\delta_{1}} \text{ if }
\\ l_{2}+1 \geq n_{1}
\end{array} \right.$
\end{Example}

The next definitions can be used to give a close formula for the map $\loc^{\Sigma}$.

\begin{Definition} Let $\text{SignPart}^{-}$ be the set of couples $(Sign(w),P^{-})$ where $w$ is as in Definition \ref{la definition du localise} and $P^{-}$ is a connected partition of $\Sign^{-}(w)$. We define a map $T : \text{SignPart}^{-} \rightarrow \{\text{Finite trees}\}$ by sending $(w,P^{-})$ to the tree defined recursively by :
\newline a) the root of the tree is labeled by $(S^{-}(w),S^{+}(w))$
\newline b) consider a vertex of the tree labeled by a couple of parts $(E^{-},E^{+})$ of $\{1,\ldots,d\}$. If $E^{-} \not= \emptyset$ and $E^{+} \not= \emptyset$, then, for each part $P \subset \partial S^{+}(w)$, we draw an arrow starting from $V$ to a new vertex $V'$, and we label $V'$ by the couple $(E^{+} - P,P)$.
\end{Definition}

\begin{Example} Below, we choose for all examples the connected partition of $\Sign^{-}(w)$ made of singletons.
\newline i) In depth one, the two trivial trees $(1)^{-}$ and $(1)^{+}$
\newline ii) In depth two, we have the two trivial trees $(12)^{-}$ and $(12)^{+}$, as well as 
\begin{center}
\begin{tikzpicture}[->,>=stealth',shorten >=1pt,auto,node distance=2cm,
thick,main node/.style={font=\sffamily}]
\node[main node] (1) {$(1)^{+}(2)^{-}$};
\node[main node] (2) [below left of=1] {$(1)^{+}$};
\node[main node] (3) [below right of=1] {$(1)^{-}$};
\path[every node/.style={font=\sffamily\small}]
		(1) edge node [left] {} (2)
		edge [right] node[left] {} (3)
		;
		\end{tikzpicture}
		\begin{tikzpicture}[->,>=stealth',shorten >=1pt,auto,node distance=2cm,
		thick,main node/.style={font=\sffamily}]
		\node[main node] (1) {$(1)^{-}(2)^{+}$};
		\node[main node] (2) [below left of=1] {$(2)^{+}$};
		\node[main node] (3) [below right of=1] {$(2)^{-}$};
		\path[every node/.style={font=\sffamily\small}]
		(1) edge node [left] {} (2)
		edge [right] node[left] {} (3)
		;
		\end{tikzpicture}
	\end{center}
	\noindent iii) In depth three, we have the two trivial trees $(123)^{+}$ and $(123)^{-}$, as well as 
	\begin{center}
		\begin{tikzpicture}[->,>=stealth',shorten >=1pt,auto,node distance=2cm,
		thick,main node/.style={font=\sffamily}]
		\node[main node] (1) {$(1)^{-}(23)^{+}$};
		\node[main node] (2) [below left of=1] {$(23)^{+}$};
		\node[main node] (3) [below right of=1] {$(2)^{-}(3)^{+}$};
		\node[main node] (4) [below left of=3] {$(3)^{+}$};
		\node[main node] (5) [below right of=3] {$(3)^{-}$};
		\path[every node/.style={font=\sffamily\small}]
		(1) edge node [left] {} (2)
		edge [right] node[left] {} (3)
		(3) edge node [left] {} (4)
		edge [right] node[left] {} (5) ;
		\end{tikzpicture}
		\begin{tikzpicture}[->,>=stealth',shorten >=1pt,auto,node distance=2cm,
		thick,main node/.style={font=\sffamily}]
		\node[main node] (1) {$(12)^{+}(3)^{-}$};
		\node[main node] (2) [below left of=1] {$(12)^{+}$};
		\node[main node] (3) [below right of=1] {$(1)^{+}(2)^{-}$};
		\node[main node] (4) [below left of=3] {$(1)^{+}$};
		\node[main node] (5) [below right of=3] {$(1)^{-}$};
		\path[every node/.style={font=\sffamily\small}]
		(1) edge node [left] {} (2)
		edge [right] node[left] {} (3)
		(3) edge node [left] {} (4)
		edge [right] node[left] {} (5) ;
		\end{tikzpicture}
		\noindent 
		\newline 
		\newline 
		\begin{tikzpicture}[->,>=stealth',shorten >=1pt,auto,node distance=2cm,
		thick,main node/.style={font=\sffamily}]
		\node[main node] (1) {$(1)^{-}(2)^{+}(3)^{-}$};
		\node[main node] (2) [left of=1] {$(13)^{+}$};
		\node[main node] (3) [right of=1] {$(13)^{-}$};
		\node[main node] (4) [below left of=1] {$(1)^{+}(3)^{-}$};
		\node[main node] (5) [below right of=1] {$(1)^{-}(3)^{+}$};
		\node[main node] (6) [left of=4] {$(1)^{+}$};
		\node[main node] (7) [below left of=4] {$(1)^{-}$};		
		\node[main node] (8) [below right of=5] {$(3)^{+}$};
		\node[main node] (9) [right of=5] {$(3)^{-}$};
		;
		\path[every node/.style={font=\sffamily\small}]
		(1) edge node [left] {} (2)
		edge [right] node[left] {} (3)
		edge node [left] {} (4)
		edge [right] node[left] {} (5)
		(4) edge node [left] {} (6)
		edge [right] node[left] {} (7)
		(5)	edge node [left] {} (8)
		edge [right] node[left] {} (9) ;
		\end{tikzpicture}
		\begin{tikzpicture}[->,>=stealth',shorten >=1pt,auto,node distance=2cm,
		thick,main node/.style={font=\sffamily}]
		\node[main node] (1) {$(12)^{-}(3)^{+}$};
		\node[main node] (2) [below left of=1] {$(3)^{+}$};
		\node[main node] (3) [below right of=1] {$(3)^{-}$};
		\path[every node/.style={font=\sffamily\small}]
		(1) edge node [left] {} (2)
		edge [right] node[left] {} (3)
		;
		\end{tikzpicture}
        \begin{tikzpicture}[->,>=stealth',shorten >=1pt,auto,node distance=2cm,
		thick,main node/.style={font=\sffamily}]
		\node[main node] (1) {$(1)^{+}(23)^{-}$};
		\node[main node] (2) [below left of=1] {$(1)^{+}$};
		\node[main node] (3) [below right of=1] {$(1)^{-}$};
		\path[every node/.style={font=\sffamily\small}]
		(1) edge node [left] {} (2)
		edge [right] node[left] {} (3);
		\end{tikzpicture}
	\end{center}
\end{Example}

\begin{Proposition} (informal version) A close formula for the map $\loc^{\Sigma}$ can be written as a sum over the set of paths from the root to the leaves (sequences of nodes $(N_{1},\ldots,N_{r})$ such that $N_{1}$ is the root, $N_{r}$ is a leaf and, for each $i$, $N_{i+1}$ is a son of $N_{i}$).
\end{Proposition}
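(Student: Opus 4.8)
The plan is to take the recursive definition of $\loc^{\Sigma}$ from Proposition-Definition \ref{loc for har n} and unfold it completely, reading off the tree $T(w,P^{-})$ as the record of this unfolding. First I would fix $w$ together with a connected partition $P^{-}$ of $\Sign^{-}(w)$; the partition prescribes, for each recursion step, which connected block of negative indices is summed out via equation (\ref{eq:recurrence}). Each such summation bounds the chosen block below and above by its neighbouring positive indices and, by Proposition-Definition \ref{numbers mathcal B}, produces a polynomial $\sum_{\delta,\delta'} \mathcal{B}^{\delta,\delta'}_{(m_{i});(\xi^{j_{i}})}\, m^{\delta} {m'}^{\delta'}$ in those two boundary indices while strictly decreasing the depth. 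The resulting monomials in the boundary indices are then reabsorbed as increased exponents of the adjacent positive indices, which is exactly the passage recorded in the tree by a branching over subsets $P$ of $\partial \Sign^{+}(w)$.

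Next I would set up the correspondence between the recursion and the tree explicitly. To a node labelled $(E^{-},E^{+})$ I attach the intermediate word obtained after the negative indices outside $E^{-}$ have already been eliminated, so that $E^{-}$ records the negative indices still to be processed and $E^{+}$ the positive indices available as summation bounds. The root $(\Sign^{-}(w),\Sign^{+}(w))$ is the starting word; a node is a leaf precisely when $E^{-}=\emptyset$, i.e.\ when the word has become fully positive and represents a genuine (non-localized) multiple harmonic sum $\frak{h}(\cdot)$, sitting in the tensor factor $\mathcal{O}^{\sh,\frak{e}}$. The branching rule of the tree, which attaches to a node the children labelled $(E^{+}-P,P)$ for $P \subset \partial \Sign^{+}(w)$, is then checked to coincide step by step with the distribution of the powers $m^{\delta}{m'}^{\delta'}$ produced by (\ref{eq:recurrence}) onto the boundary positive indices.

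With this dictionary in place, the closed formula follows by induction on the depth. Each edge of the tree carries the coefficient $\mathcal{B}^{\delta,\delta'}_{(m_{i});(\xi^{j_{i}})}$ contributed by the corresponding application of (\ref{eq:recurrence}), and a root-to-leaf path $(N_{1},\ldots,N_{r})$ collects the product of these edge coefficients together with the residual monomial in $\frak{m}$; the leaf supplies the first tensor factor $w_{\mathrm{leaf}} \in \mathcal{O}^{\sh,\frak{e}}$. Summing over all such paths reproduces the fully expanded recursion, so that $\loc^{\Sigma}(w) = \sum_{\text{paths}} c_{\text{path}}\,\big( w_{\mathrm{leaf}} \otimes P_{\text{path}}(\frak{m})\big)$, where $c_{\text{path}}$ is the product of the edge coefficients along the path and $P_{\text{path}}(\frak{m}) \in \mathbb{Q}(\xi)[\frak{m}]$ is the accumulated polynomial.

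The main obstacle will be the combinatorial bookkeeping of the second step: one must verify that the branching over subsets of $\partial \Sign^{+}(w)$ neither over- nor under-counts the ways the recurrence redistributes powers onto adjacent positive indices, and that nested applications, where an eliminated block raises the exponent of a positive index which itself borders a further block, are handled consistently. Establishing that the tree's recursive branching is in bijection with the terms of the unfolded recursion, with matching $\mathcal{B}$-coefficients, is the heart of the argument; once this bijection is pinned down the closed formula is a routine reorganization of the sum.
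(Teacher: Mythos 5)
Your proposal is essentially the paper's argument: the paper's entire proof is ``Induction on $d$,'' and your unfolding of the recursion of Proposition-Definition \ref{loc for har n} along the tree, with edge weights given by the coefficients $\mathcal{B}^{\delta,\delta'}_{(m_{i});(\xi^{j_{i}})}$ of Proposition-Definition \ref{numbers mathcal B}, is exactly the induction the author has in mind (the statement is explicitly labelled informal, with the explicit formula deferred to a later version).

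One point in your dictionary does not match the tree as defined in the paper: you assert that a node is a leaf precisely when $E^{-}=\emptyset$, but the branching rule only fires when \emph{both} $E^{-}\neq\emptyset$ and $E^{+}\neq\emptyset$, and the worked examples display leaves such as $(1)^{-}$, $(3)^{-}$, $(13)^{-}$. These negative-signed leaves are needed: they account for the terms in the recursion where the residual word is totally negative, so the contribution is a pure polynomial $m^{\delta}$ with an empty harmonic-sum factor (e.g.\ the $\sum \mathcal{B}_{\delta_{1}}^{l_{1}}\mathcal{B}_{\delta_{2}}^{\delta_{1}-n_{2}}m^{\delta_{2}}$ term in $\frak{h}_{m}(n_{2},-l_{1})$ when $l_{1}+1>n_{2}$). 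Your closed formula $\loc^{\Sigma}(w)=\sum_{\text{paths}}c_{\text{path}}\,(w_{\mathrm{leaf}}\otimes P_{\text{path}}(\frak{m}))$ survives once you allow $w_{\mathrm{leaf}}=\emptyset$ at such leaves, but as written your induction would drop these terms.
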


\begin{proof} Induction on $d$.
\end{proof}

The explicit version of this proposition formula will appear in the next version of this text, and in the next version of \cite{I-2}.

\subsubsection{Correspondence between the two localizations}

The localization maps $\loc^{\smallint}$ defined in Proposition-Definition \ref{loc for Li} of \S2.2.1, and $\loc^{\Sigma}$ defined in Proposition-Definition \ref{loc for har n} of \S2.2.2 can be related to each other, via the power series expansions of localized multiple polylogarithms in terms of localized multiple harmonic sums (Proposition-Definition \ref{loc mhs}).

\subsection{Multiple polylogarithms, localization and analytic continuation}

We now define analytic continuations of the localized multiple polylogarithms of Proposition-Definition \ref{localized Li}. We have to make a distinction between the complex setting (\S2.3.1) and the $p$-adic setting (\S2.3.2).

\subsubsection{In the complex setting}

We now assume that $K$ is embedded in $\mathbb{C}$. By \cite{Deligne}, there is an isomorphism of comparison between $\pi_{1}^{\un,\DR}(X) \times \mathbb{C}$ and the Betti realization of $\pi_{1}^{\un}(X_{K})\times \mathbb{C}$, and the coefficients of this isomorphism are iterated path integrals in the sense of \cite{Chen}.

\begin{Definition} (Goncharov, \cite{Go}) Let $\gamma$ be a path on $X(\mathbb{C})$ in the generalized sense where the extremities of $\gamma$ are not necessarily points of $X(\mathbb{C})$ but can also be tangential base-points. The multiple polylogarithms are the following functions $(j_{1},\ldots,j_{n} \in \{0,\ldots,r\})$ :
$$ \Li(\gamma)(e_{z_{j_{n}}} \ldots e_{z_{j_{1}}}) = \int_{t_{n}=0}^{1} \gamma^{\ast}(\frac{dt_{n}}{t_{n}-z_{j_{n}}}) \int_{t_{n-1}=0}^{t_{n}} \ldots \gamma^{\ast}(\frac{dt_{2}}{t_{2}-z_{j_{2}}}) \int_{t_{1}=0}^{t_{2}} \gamma^{\ast}(\frac{dt_{1}}{t_{1}-z_{j_{1}}}) $$
\noindent Then 
$$\Li(\gamma) = 1 + \sum_{\substack{n \in \mathbb{N}^{\ast} \\ z_{j_{n}},\ldots,z_{j_{1}} \in \{0,\xi^{1},\ldots,\xi^{N} \}}} \Li(\gamma)(e_{z_{j_{n}}} \ldots e_{z_{j_{1}}}) e_{z_{j_{n}}} \ldots e_{z_{j_{1}}} \in \pi_{1}^{\un,\DR}(X_{K},b,a)(\mathbb{C})$$
\noindent where $a$ and $b$ are the extremities of $\gamma$.
\end{Definition}

\begin{Definition} \label{analytic continuation loc MPL}Let $w$ be a localized word. Let $\gamma$ be a path on $(\mathbb{P}^{1} - \{0,z_{1},\ldots,z_{r},\infty\})(\mathbb{C})$ in the previous sense. We write $\loc^{\smallint}(w) = \sum_{w'} F_{w'} \otimes w'$.
\newline Let $\Li^{\loc}_{\gamma}[w]=\sum_{w'}F_{w'}(z)\Li_{\gamma}(w')$ where $z$ is the endpoint of $\gamma$.
\end{Definition}

\subsubsection{In the $p$-adic setting for $\mathbb{P}^{1} - \{0,\mu_{N},\infty\}$}

The notion of crystalline pro-unipotent fundamental groupoid ($\pi_{1}^{\un,\crys}$) has been defined with three different points of view in \cite{Deligne} \S11, \cite{CLS}, and \cite{Shiho 1}, \cite{Shiho 2}). In our simple example, the three points of view are equivalent and we follow \cite{Deligne} \S11.
\newline We go back to the notations of \S1.1 : $p$ is a prime number, $N \in \mathbb{N}^{\ast}$ is prime to $p$, $\xi_{N}$ is a primitive $N$-th root of unity in $\overline{\mathbb{Q}_{p}}$. We apply \S2.1 and \S2.2 in the case where $K=\mathbb{Q}_{p}(\xi_{N})$, $r=N$, and $(z_{1},\ldots,z_{r})=(\xi_{N}^{1},\ldots,\xi_{N}^{N})$, thus $X=(\mathbb{P}^{1} - \{0,\mu_{N},\infty\})/ K$. According to \cite{Deligne}, $\pi_{1}^{\un,\crys}(\mathbb{P}^{1} - \{0,\mu_{N},\infty\}\text{ }/\text{ }\mathbb{F}_{q})$ is the data of $\pi_{1}^{\un,\DR}(\mathbb{P}^{1} - \{0,\mu_{N},\infty\}\text{ }/\text{ }K)$ plus the Frobenius structure of the KZ connection.  The next definitions refer to Coleman integration as in \cite{Coleman}, \cite{Besser}, \cite{Vologodsky}. They depend on the choice of a determination of the $p$-adic logarithm. The alphabet $\frak{e}$ of the previous paragraphs is now $\{e_{0},e_{\xi^{1}},\ldots,e_{\xi^{N}}\}$ and we denote it by $e_{0 \cup \mu_{N}}$.

\begin{Definition} \label{def Li coleman} (Furusho \cite{Furusho 1} for $N=1$, Yamashita \cite{Yamashita} for any $N$).
\newline We fix a determination $\log_{p}$ of the $p$-adic logarithm. Let $\Li_{p,\KZ}$ be the non-commutative generating series of Coleman functions on $X$ which satisfies $\nabla_{\KZ}\Li_{p,\KZ} = 0$ and 
$\Li_{p,\KZ}(z) \underset{z \rightarrow 0}{\sim} e^{e_{0} \log_{p}(z)}$.
\end{Definition}

The next definition is a generalization of a definition in \cite{FKMT3}.

\begin{Definition} \label{p-adic continuation loc MPL} Let $w$ be a localized word. We write $\loc^{\smallint}(w) = \sum_{w'} F_{w'} \otimes w'$.
\newline Let $\Li^{\loc}_{p,\KZ}[w]=\sum_{w'}F_{w'}\Li_{p,\KZ}[w']$.
\end{Definition}

\section{Localized adjoint $p$-adic multiple zeta values at roots of unity}

We review the definition of $p$-adic multiple zeta values at roots of unity, (\S3.1), of the pro-unipotent $\Sigma$-harmonic action (\S3.2) and we define and study the localized adjoint $p$-adic multiple zeta values at roots of unity (\S3.3, \S3.4).

\subsection{Review on $p$MZV$\mu_{N}$'s and Ad$p$MZV$\mu_{N}$'s}

We review definitions of $p$-adic multiple zeta values at roots of unity.

\begin{Definition} \label{def of tau} (\cite{Deligne Goncharov}, \S5) Let $\tau$ be the action of $\mathbb{G}_{m}(K)$ on $K\langle\langle e_{X_{K}} \rangle\rangle$, that maps $(\lambda,f) \in \mathbb{G}_{m}(K) \times K \langle\langle \frak{e} \rangle\rangle$ to $\sum_{w\in\Wd(\frak{e})} \lambda^{\weight(w)} f[w]w$. 
\end{Definition}

In the next definition, we adopt this convention, which is different from conventions used by some other authors.

\begin{Convention} For $\alpha \in \mathbb{N}^{\ast}$, the Frobenius iterated $\alpha$ times is $\tau(p^{\alpha})\phi^{\alpha}$ where $\phi$ is the Frobenius in the sense of \cite{Deligne}, \S13.6, and, for each $\alpha \in -\mathbb{N}^{\ast}$, the Frobenius iterated $\alpha$ times is $\phi^{-\alpha}$ is in the sense of \cite{Deligne}, \S11. 
\end{Convention}

\begin{Notation} Let $\Pi_{1,0} = \pi_{1}^{\un,\DR}(X_{K},\vec{1}_{1},\vec{1}_{0})$.
\end{Notation}

\noindent A first point of view on the notion of $p$MZV$\mu_{N}$'s uses the canonical De Rham paths evoked in \S2.1.1 :

\begin{Definition} \label{MZV Deligne} (general definition in \cite{I-1}, Definition 2.2.5 ; anterior particular cases : $N=1$, $\alpha=1$, Deligne, Arizona Winter School, 2002 (unpublished) ; $N \in \{1,2\}$, $\alpha=1$, Deligne and Goncharov \cite{Deligne Goncharov} \S5.28 ; $N=1$, $\alpha=-1$ \"{U}nver \cite{Unver MZV} ,\S1 ; any $N$ and $\alpha = \frac{\log(q)}{\log(p)}$, Yamashita \cite{Yamashita}, Definition 3.1 ; any $N$ and $\alpha=-1$ \"{U}nver \cite{Unver cyclotomic}, \S2.2.3).
\newline If $\alpha \in \mathbb{N}^{\ast}$, let
$\Phi_{p,\alpha} = \tau(p^{\alpha})\phi^{\alpha} ( _{\vec{1}_{{\xi^{j}}^{p^{\alpha}}}} 1 _{\vec{1}_{0}}) \in \Pi_{1,0}(K)$ ; if $\alpha \in -\mathbb{N}^{\ast}$, let $\Phi_{p,\alpha} = \phi^{\alpha} ( _{\vec{1}_{{\xi^{j}}^{p^{\alpha}}}} 1 _{\vec{1}_{0}}) \in \Pi_{1,0}(K)$.
\newline For any $\alpha \in \mathbb{N}^{\ast} \cup -\mathbb{N}^{\ast}$, the $p$-adic multiple zeta values at roots of unity are the numbers $\zeta_{p,\alpha}\big((n_{i});(\xi^{j_{i}})\big)= \Phi_{p,\alpha}[e_{0}^{n_{1}-1}e_{\xi^{j_{1}}}\ldots e_{0}^{n_{d}-1}e_{\xi^{j_{d}}}]$, $d \in \mathbb{N}^{\ast}$, and $n_{1},\ldots,n_{d} \in \mathbb{N}^{\ast}$, and $j_{1},\ldots,j_{d} \in \{1,\ldots,N\}$.
\newline For all objects $\ast$ above, and $\alpha = \frac{\log(q)}{\log(p)} \tilde{\alpha}$, let $\ast_{q,\tilde{\alpha}} = \ast_{p,\alpha}$.
\end{Definition}

\indent The second point of view on the notion of $p$MZV$\mu_{N}$'s (which is the general and conceptual one whereas the previous one is more ad hoc) relies on Coleman integration.

\begin{Definition} \label{MZV Coleman} (N=1 : \cite{Furusho 1} Definition 2.17 ; any $N$ Yamashita (\cite{Yamashita} Definition 2.4))
\newline Let $\Phi_{p,\KZ}$ be the unique element of $\Pi_{1,0}(K)$ which is invariant by the Frobenius. 
\newline The numbers $\zeta_{p,\KZ} \big((n_{i});(\xi^{j_{i}})\big) = \Phi_{p,\KZ}[ e_{0}^{n_{d}-1}e_{\xi^{j_{d}}} \ldots e_{0}^{n_{1}-1}e_{\xi^{j_{1}}}] \in K$ are called $p$-adic multiple zeta values at roots of unity.
\end{Definition}

\indent In \cite{I-3}, we have denoted by $\Phi_{p,-\infty} = \Phi_{p,\KZ}$, $\zeta_{p,-\infty} = \zeta_{p,\KZ}$ and we also defined the following variant. Below, the group law $\circ^{\smallint_{0}^{1}}$ on $\Pi_{1,0}$ is the group law denoted by $\circ$ in \cite{Deligne Goncharov}, \S5.12.

\begin{Definition} \cite{I-3}\label{MZV Coleman bis}
Let $\Phi_{p,\infty}$ be the inverse of $\Phi_{p,\KZ}$ for the group law $\circ^{\smallint_{0}^{1}}$. The numbers $\zeta_{p,\infty} \big((n_{i});(\xi^{j_{i}})\big) = \Phi_{p,\infty}[ e_{0}^{n_{d}-1}e_{\xi^{j_{d}}} \ldots e_{0}^{n_{1}-1}e_{\xi^{j_{1}}}] \in K$ are called $p$-adic multiple zeta values at roots of unity.
\end{Definition}

In the Definitions \ref{MZV Deligne}, \ref{MZV Coleman} and \ref{MZV Coleman bis}, we are actually adopting a terminology which differs from the terminologies in other works : the $p$-adic multiple zeta values at roots of unity for $\alpha=-1$ are called cyclotomic $p$-adic multiple zeta values in \cite{Unver cyclotomic}, those for $\alpha = \frac{\log(q)}{\log(p)}$ or $\alpha=-\infty$ are called $p$-adic multiple $L$-values in \cite{Yamashita}.
\newline For any $\alpha,\alpha' \in \mathbb{Z} \cup \{\pm \infty\} - \{0\}$, $\zeta_{p,\alpha}$ and $\zeta_{p,\alpha'}$ can be expressed in terms of each other : for certain particular $\alpha$, this is written in \cite{Furusho 2}, Theorem 2.14, and in \cite{Yamashita} ; and this is expressed in terms of $p$-adic pro-unipotent harmonic actions in \cite{I-3}. We have also defined :

\begin{Definition} \label{def adjoint MZV}(\cite{II-1}) For $\alpha \in \mathbb{N}^{\ast}$, the numbers $\zeta^{\Ad}_{p,\alpha}\big(l;(n_{i});(\xi^{j_{i}})\big) =$ 
\newline $\sum_{j=1}^{N} \xi^{-jp^{\alpha}}(z \mapsto \xi^{j}z)_{\ast} (\Phi_{p,\alpha}^{-1}e_{1}\Phi_{p,\alpha})[e_{0}^{l-1}e_{\xi^{j_{d+1}}}e_{0}^{n_{d}-1}e_{\xi^{j_{d}}}\ldots e_{0}^{n_{1}-1}e_{\xi^{j_{1}}}]$.
\newline We call these numbers the adjoint $p$-adic multiple zeta values at $N$-th roots of unity (Ad$p$MZV$\mu_{N}$'s).
\end{Definition}

In the particular case of $\mathbb{P}^{1} - \{0,1,\infty\}$, these are called adjoint $p$-adic multiple zeta values (Ad$p$MZV's) and are the numbers $\zeta_{p,\alpha}^{\Ad}\big(l;(n_{i})\big)_{d}=(\Phi_{p,\alpha}^{-1}e_{1}\Phi_{p,\alpha})[e_{0}^{l-1}e_{1}e_{0}^{n_{d}-1}e_{1}\ldots e_{0}^{n_{1}-1}e_{1}]$.

\subsection{Review on the $p$-adic pro-unipotent $\Sigma$-harmonic action}

This paragraph is a preliminary for the definition of localized Ad$p$MZV$\mu_{N}$'s in \S3.4, it is a review on definitions in \cite{I-2} \S4, \S5. We adopt the notations of \cite{I-2}.
\newline Below, $\loc^{\vee}$ is the dual of the map $\loc^{\Sigma}$ defined in Proposition-Definition \ref{loc for har n}.
\newline Let $K\langle\langle e_{0}^{\pm 1},e_{\xi^{1}},\ldots,e_{\xi^{N}}\rangle\rangle$ be the set of linear maps $\mathbb{Q}\langle e_{0}^{\pm 1},e_{1}\rangle \rightarrow K$ where $\mathbb{Q}\langle e_{0}^{\pm 1},,e_{\xi^{1}},\ldots,e_{\xi^{N}}\rangle$ is the localization of the non-commutative ring $\mathbb{Q}\langle e_{0},e_{\xi^{1}},\ldots,e_{\xi^{N}}\rangle$ equipped with the concatenation product at the multiplicative part generated by $e_{0}$. The variant $K\langle\langle e_{0}^{\pm 1},e_{\xi^{1}},\ldots,e_{\xi^{N}}\rangle\rangle_{\har}
=\prod_{d \in \mathbb{N},\text{ } (n_{i})_{i} \in \mathbb{Z}^{d},\text{ } (j_{i})_{i} \in (\mathbb{Z}/N\mathbb{Z})^{d+1}} K.((n_{i});(\xi^{j_{i}}))_{d}$ contains, for each $m \in \mathbb{N}$, the generating sequence $\har_{m}^{\loc}$ of localized multiple harmonic sums $\har_{m}(w)$. Below, the subscript $S$ denotes a condition on the $p$-adic valuations of the coefficients defined in \cite{I-2}.
\newline The localized $p$-adic pro-unipotent $\Sigma$-harmonic action defined in \cite{I-2}, \S5 is a map :
$$ (\circ_{\har}^{\Sigma})_{\loc} : (K \langle \langle \frak{e} \rangle\rangle_{\har}^{\Sigma})_{S} \times \Map(\mathbb{N},K \langle\langle e_{0}^{\pm 1},e_{\xi^{1}},\ldots,e_{\xi^{N}} \rangle\rangle_{\har}) \rightarrow 
\Map(\mathbb{N},K\langle\langle e_{0},e_{1}\rangle\rangle_{\har}) $$
\noindent In this paper, we will call it the $p$-adic pro-unipotent $\Sigma$-harmonic action localized at the source. The $p$-adic pro-unipotent $\Sigma$-action defined in \cite{I-2} is the map 

$$ \circ_{\har}^{\Sigma} : K \langle\langle e_{0\cup \mu_{N}} \rangle\rangle_{S} \times \Map(\mathbb{N} \times K \langle\langle e_{0\cup \mu_{N}} \rangle\rangle_{\har}^{\Sigma}) \rightarrow \Map(\mathbb{N} \times K \langle\langle e_{0\cup \mu_{N}} \rangle\rangle_{\har}^{\Sigma}) $$
\noindent defined as $\circ_{\har}^{\Sigma}= (\circ_{\har}^{\Sigma})_{\loc} \circ (\id \times \loc^{\vee})$. We have, in the sense of \cite{I-2}, \S5,
$$ \har_{p^{\alpha}\mathbb{N}} = \har_{p^{\alpha}} (\circ_{\har}^{\Sigma})_{\loc} \har_{\mathbb{N},\loc}^{(p^{\alpha})} = \har_{p^{\alpha}} (\circ_{\har}^{\Sigma})_{\loc} \har_{\mathbb{N},\loc}^{(p^{\alpha})} $$
	
\begin{Example} (\cite{I-2}, \S5) $N=1$, $d=2$ :
$\har_{p^{\alpha}m}(n_{1},n_{2}) = \har_{m}(n_{1},n_{2}) +$
\newline $\sum_{l_{1},l_{2}\geq 0}  
\prod_{i=1}^{2} {-n_{i} \choose l_{i}} m^{n_{i}} \times
\bigg[ \tilde{\frak{h}}_{m}(-l_{1}-l_{2}) \har_{p^{\alpha}}(n_{2}+l_{2},n_{1}+l_{1})
+ \tilde{\frak{h}}_{m}(-l_{2},-l_{1}) \prod_{i=1}^{2} \har_{p^{\alpha}}(n_{i}+l_{i}) \bigg]$
\newline $+ m^{n_{1}+n_{2}} \bigg[ \sum_{l_{1}\geq 0} \har_{p^{\alpha}}(n_{1}+l_{1}) {-n_{1} \choose l_{1}} \widetilde{\frak{h}}_{m}(n_{2},-l_{1})
+ \sum_{l_{2}\geq 0} \har_{p^{\alpha}}(n_{2}+l_{2}) {-n_{2} \choose l_{2}}  \widetilde{\frak{h}}_{m}(-l_{2}, n_{1}) \bigg]$
\newline $= 
\har_{m}(n_{1},n_{2}) +$
\newline $\sum_{t \geq 1} m^{n_{1}+n_{2}+t}
\bigg[ \sum_{\substack{l_{1},l_{2} \geq 0 \\ l_{1}+l_{2} \geq t-2}} 
\mathcal{B}_{t}^{l_{2},l_{1}} 
\prod_{i=1}^{2} {-n_{i} \choose l_{i}} \har_{p^{\alpha}}(n_{i}+l_{i})+
\sum_{\substack{l_{1},l_{2} \geq 0 \\ l_{1}+l_{2} \geq t-1}} 
\mathcal{B}_{t}^{l_{1}+l_{2}} 
\prod_{i=1}^{2} {-n_{i} \choose l_{i}} \har_{p^{\alpha}}(n_{2}+l_{2},n_{1}+l_{1})
\bigg] + \sum_{t \geq 1} m^{n_{2}+n_{1}+t} 
\sum_{l \geq t-1} \bigg[ {-n_{1} \choose l+n_{2}} \mathcal{B}_{t}^{l+s_{2},-n_{2}} - {-n_{2} \choose l+s_{1}} \mathcal{B}_{t}^{l+n_{1},-n_{1}} \bigg]
$
\newline $- m^{n_{2}+n_{1}} \bigg[ \sum_{l_{1} \geq n_{2}-1} \mathcal{B}_{n_{2}}^{l_{1}} {-n_{1} \choose l_{1}} \har_{p^{\alpha}}(n_{1}+l_{1}) 
- \sum_{l_{2} \geq n_{1}-1} \mathcal{B}_{n_{1}}^{l_{2}} {-n_{2} \choose l_{2}} \har_{p^{\alpha}}(n_{2}+l_{2})  \bigg]$
\newline $+ \sum_{\substack{ 1 \leq t < n_{2} \\ l \geq t-1}}
m^{n_{1}+t} \har_{m}(n_{2}-t) \mathcal{B}_{t}^{l} {-n_{1} \choose l} \har_{p^{\alpha}}(n_{1}+l) - \sum_{\substack{1 \leq t < n_{1} \\ l' \geq t-1}}
m^{n_{2}+t} \har_{m}(n_{1}-t) \mathcal{B}_{t}^{l_{2}} {-n_{2} \choose l'} \har_{p^{\alpha}}(n_{2}+l')$
\end{Example}

\subsection{Localized $p$MZV$\mu_{N}$'s : the point of view of Frobenius-invariant paths}

The problem which we want to tackle is to define $p$MZV$\mu_{N}$'s at indices $\big((n_{i});(\xi^{j_{i}})\big)_{d}$ such that $n_{1},\ldots,n_{d}$ are not necessarily $>0$.
\newline In this paragraph, we consider the notion of $p$MZV$\mu_{N}$'s in the sense of Coleman integration (Definition \ref{MZV Coleman}). 
\newline A partial solution to our problem is already given by Furusho, Komori, Matsumoto and Tsumura, using Vologodsky's version of Coleman integration \cite{Vologodsky}. We reformulate it with our terminologies.

\begin{Proposition-Definition} (Furusho, Komori, Matsumoto, Tsumura, \cite{FKMT3})
Assume that $\xi_{N}^{j_{1}}\not=1, ,\ldots,\xi_{N}^{j_{d}}\not=1$. Then $\Li_{p,\KZ}^{\loc}\big((n_{i});(\xi^{j_{i}})\big)_{d}(z)$ has an asymptotic expansion in $K[\log_{p}(1-z)]$ when $z \rightarrow 1$.
\newline The constant coefficient of this power series expansion is a generalized $p$-adic multiple zeta value at $N$-th roots of unity.
\end{Proposition-Definition}

In \cite{FKMT3}, only the inversion of the integration operator associated with $e_{0}$ is considered, whereas here we invert the integration operators associated with all letters $e_{z_{i}}$ (\S2). Thus, the definition above can be extended to our more general framework.
\newline\indent However, even with this generalization, the answer is only partial. For indices which do not necessarily satisfy the hypothesis $\xi_{N}^{j_{1}}\not=1, ,\ldots,\xi_{N}^{j_{d}}\not=1$, we have a different asymptotic expansion :

\begin{Lemma} Each function  $\Li_{p,\KZ}^{\loc}[w]$ admits, when $z \rightarrow 1$ and $z \in K$, an asymptotic expansion in the ring $K[\frac{1}{(z-1)}][\log_{p}(1-z)]$.
\end{Lemma}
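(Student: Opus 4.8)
The plan is to reduce the claim to two facts: the algebraic description of localization furnished by Proposition-Definition \ref{loc for Li}, transported to the $p$-adic setting by Definition \ref{p-adic continuation loc MPL}, together with the known local behaviour of the ordinary ($p$-adic) Coleman multiple polylogarithms near $z=1$. Writing $\loc^{\smallint}(w) = \sum_{w'} F_{w'} \otimes w'$, Definition \ref{p-adic continuation loc MPL} gives the \emph{finite} expression $\Li^{\loc}_{p,\KZ}[w] = \sum_{w'} F_{w'}\,\Li_{p,\KZ}[w']$, where each $F_{w'}$ is a global regular function on $X = (\mathbb{P}^{1} - \{0,\mu_N,\infty\})/K$ and each $\Li_{p,\KZ}[w']$ is a genuine, non-localized Coleman multiple polylogarithm (Definition \ref{def Li coleman}). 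It then suffices to control each factor separately as $z\to 1$ and to multiply.

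First I would treat the coefficients $F_{w'}$. Because $1 \in \mu_N$, the point $z=1$ is one of the punctures of $X$; hence a section $F_{w'} \in \Gamma(X,\mathcal{O}_X)$ — a polynomial in $z$ and in the functions $\frac{1}{z-\zeta}$ for $\zeta \in \{0\}\cup\mu_N$ — has at worst a pole of finite order at $z=1$. Therefore on a punctured disc around $z=1$ it admits a convergent Laurent expansion $\sum_{k\geq -m} a_k (z-1)^k$ with $a_k \in K$ and $m \in \mathbb{N}$; its principal part lies in $K[\frac{1}{z-1}]$ and its regular part in $K[[z-1]]$. This is precisely the source of the non-logarithmic pole anticipated in the discussion preceding the statement, and what distinguishes the present situation from the case $\xi_N^{j_i}\neq 1$ of the previous Proposition-Definition, in which the relevant $F_{w'}$ are regular at $z=1$.

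Next I would invoke the standard local expansion of $\Li_{p,\KZ}[w']$ at $z=1$. These functions are the coefficients of the horizontal section of $\nabla_{\KZ}$ (Proposition-Definition \ref{prop connexion}), and $\nabla_{\KZ}$ has a regular singularity at $z=1$ whose residue is, up to sign, left multiplication by the letter $e_1 = e_{z_r}$, an operator nilpotent on each piece of bounded weight. By the theory of Coleman integration this forces, as $z\to 1$, an asymptotic expansion of the form $\sum_{j} P_j(z-1)\,\big(\log_p(1-z)\big)^{j}$ with $P_j \in K[[z-1]]$ and only finitely many nonzero $P_j$ (the degree in $\log_p(1-z)$ being bounded by the weight of $w'$); that is, $\Li_{p,\KZ}[w'] \in K[[z-1]][\log_p(1-z)]$.

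Finally, multiplying the finite Laurent expansion of $F_{w'}$ by this expansion yields, for each $w'$, an asymptotic expansion whose monomials are $(z-1)^{k}\big(\log_p(1-z)\big)^{j}$ with $k$ bounded below and $j$ bounded above; summing over the finitely many $w'$ preserves this shape and produces an asymptotic expansion of $\Li^{\loc}_{p,\KZ}[w]$ in $K[\frac{1}{z-1}][[z-1]][\log_p(1-z)]$, which is the ring $K[\frac{1}{z-1}][\log_p(1-z)]$ in the sense of the statement. The only non-formal input is the regular-singularity expansion of the third step; I expect this to be the main obstacle, since it rests on the Frobenius structure of the KZ connection and on the nilpotence of its residue at $z=1$ to guarantee that only finitely many powers of $\log_p(1-z)$ occur, while the remaining manipulations are elementary operations on Laurent and power series.
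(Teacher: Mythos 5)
Your proposal is correct and follows essentially the same route as the paper, which simply combines the finite decomposition $\Li^{\loc}_{p,\KZ}[w]=\sum_{w'}F_{w'}\Li_{p,\KZ}[w']$ of Definition \ref{p-adic continuation loc MPL} with Furusho's results on the asymptotic expansion of $p$-adic multiple polylogarithms near $z=1$ (\cite{Furusho 1}, Theorems 2.13--2.18 and 3.15), which is exactly the regular-singularity input you identify as the one non-formal step. Your explicit justification of the finite-order pole of $F_{w'}$ at the puncture $z=1$ and the bookkeeping of the product expansion is a sound elaboration of what the paper leaves implicit.
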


\begin{proof} This follows from Definition 2.23 and auxiliary results to Furusho's definition of $p$MZVs (\cite{Furusho 1}, Theorem 2.13 to Theorem 2.18, and Theorem 3.15).
\end{proof}

It may be tempting to define a notion of regularized $p$MZV$\mu_{N}$'s by regularizing brutally the asymptotic expansion above and taking the constant term with respect to both $\log(1-z)$ and $\frac{1}{1-z}$. However, this definition is not relevant. It would imply that $\zeta_{p,\KZ}(-n)$ is zero for all $n \in \mathbb{N}^{\ast}$, whereas we expect non-zero values in odd weights, corresponding to the values at negative integers of the Riemann zeta function. These observations motivate to consider the point of view on $p$MZV$\mu_{N}$'s in terms of canonical De Rham paths (Definition \ref{MZV Deligne}), which we will do in the next paragraph.

\subsection{Localized Ad$p$MZV$\mu_{N}$'s : the point of view of canonical De Rham paths and pro-unipotent harmonic actions}

As in the previous papers, we are going to replace the Frobenius by the harmonic Frobenius in the sense of \cite{I-2}, to use the $p$-adic pro-unipotent harmonic actions, and to replace $p$MZV$\mu_{N}$'s by adjoint $p$MZV$\mu_{N}$'s.
\newline\indent What we want to define is numbers $\zeta^{\Ad}_{p,\alpha}\big( \begin{array}{c} \xi^{j_{1}}, \ldots, \xi^{j_{d}} \\ l;n_{1},\ldots,n_{d} \end{array} \big)$ with $n_{1},\ldots,n_{d}$ of any sign.
\newline We are going to see that this approach will give us a solution to the problem observed in \S3.3, defining implicitly.
\newline The map $(\circ_{\har}^{\Sigma})_{\loc}$ mentioned in \S3.2 is defined by lifting an equation involving multiple harmonic sums. We now define a $p$-adic pro-unipotent $\Sigma$-harmonic action involving a localization both at the source and at the target, by a similar procedure.

\begin{Proposition-Definition} \label{loc action} Let the map 
$$ \circ_{\har}^{\Sigma,\loc,\loc} : K \langle \langle e_{0 \cup \mu_{N}} \rangle\rangle \times \Map(\mathbb{N},K\langle\langle e_{0 \cup \mu_{N}} \rangle \rangle_{\har}^{\Sigma}) \rightarrow \Map(\mathbb{N},K\langle\langle e_{0 \cup \mu_{N}} \rangle \rangle_{\har}^{\Sigma}) $$ 
\noindent called the $p$-adic pro-unipotent $\Sigma$-harmonic action localized at the source and at the target be the map defined by extending the following procedure, used for defining $(\circ_{\har}^{\Sigma})_{\loc}$, to localized multiple harmonic sums : we consider a multiple harmonic sum, whose domain of summation is defined by inequalities of the form $0<m_{1}<\ldots<m_{d}<p^{\alpha}m$. We write the Euclidean division of each $m_{i}$ by $p^{\alpha}$ : $m_{i}=p^{\alpha}u_{i}+r_{i}$ and we express the domain of summation in terms of the $u_{i}$'s and $r_{i}$'s. Then, we write $m_{i}^{-n_{i}} = r_{i}^{-n_{i}}\sum_{l_{i}\geq 0} {-n_{i} \choose l_{i}} \big( \frac{p^{\alpha}u_{i}}{r_{i}} \big)^{l_{i}}$. The map $\circ_{\har}^{\Sigma,\loc,\loc}$ is the natural essentialization of the equation relating localized multiple harmonic sums which appears.
\newline Let the $p$-adic pro-unipotent $\Sigma$-harmonic action localized at the target be the map $\circ_{\har}^{\Sigma,\ast,\loc} = \circ_{\har}^{\Sigma,\loc,\loc} \circ (id \times \loc^{\vee})$. We have :
\begin{equation} \label{eq: the new equation} \har_{p^{\alpha}\mathbb{N},\loc}  = \har_{p^{\alpha}} \circ_{\har,\ast,\loc}^{\Sigma} \har_{\mathbb{N}}^{(p^{\alpha})} 
\end{equation}
\end{Proposition-Definition}

\begin{proof} Similar to the previous statements from \cite{I-2}, \S5 reviewed in \S3.2.
\end{proof}

\noindent We can now recuperate the localized Ad$p$MZV$\mu$'s, as the coefficients of the term of depth $0$ in the $p$-adic pro-unipotent $\Sigma$-harmonic action localized at the target.

\begin{Definition} \label{def localized pMZV} Let $\alpha \in \mathbb{N}^{\ast}$. 
For any localized word $(l;(n_{i});(\xi^{j_{i}}))_{d}$, let us consider expression of $\har_{p^{\alpha}m}\big((n_{i});(\xi^{j_{i}})\big)_{d}$ in terms of $m$, $\har_{m}$ and $\har_{p^{\alpha}}$ given by equation (\ref{eq: the new equation}).	
\newline Let $\zeta_{p,\alpha}^{\Ad}(l;(n_{i});(\xi^{j_{i}}))_{d}$ be the coefficient of $\xi^{jm}m^{l}\har_{m}(\emptyset)$ (where $j$ is the unique element of $\mathbb{Z}/N\mathbb{Z}$ such that such a term appears in the expression).
\newline The weight of an index 
$\big( \begin{array}{c} \xi^{j_{1}}, \ldots, \xi^{j_{d}} \\ l;n_{1},\ldots,n_{d} \end{array} \big)$ is $l+n_{d}+\ldots+n_{1}$.
\end{Definition}

\noindent We now focus on a particular case :

\begin{Definition} \label{def totally negative} The totally negative Ad$p$MZV$\mu_{N}$'s are the numbers $\zeta_{p,\alpha}^{\Ad}(l;(n_{i});(\xi^{j_{i}})$ with $n_{i} \leq 0$ for all $i$.
\end{Definition}

\noindent The next proposition is an analogue of the fact that the desingularized values of multiple zeta functions at tuples of negative integers are rational numbers, having a natural expression as polynomials of Bernoulli numbers.

\begin{Proposition}
The totally negative Ad$p$MZV$\mu_{N}$'s are elements of $\mathbb{Q}(\xi)$.
\newline They can be non-zero only if $1 \leq l+n_{1}+\ldots+n_{d} \leq n_{1}+\ldots+n_{d} +d$, i.e. $1 - (n_{1}+\ldots+n_{d}) \leq l \leq d$.
\end{Proposition}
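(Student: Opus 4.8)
The plan is to use that total negativity collapses every infinite series occurring in the harmonic action into a finite polynomial summation, so that the entire computation lives in $\mathbb{Q}(\xi)$, and then to extract the two inequalities from, respectively, a degree count in $m$ and a weight count. For the membership in $\mathbb{Q}(\xi)$: when all $n_i \leq 0$, write $n_i = -l_i$ with $l_i \geq 0$, so that the localized multiple harmonic sums $\frak{h}_{p^{\alpha}m}$ become honest polynomial summations $\sum m_1^{l_1}\cdots m_d^{l_d}\,(\text{roots of unity})$. Carrying out the Euclidean divisions $m_i = p^{\alpha}u_i + r_i$ that define $\circ_{\har}^{\Sigma,\ast,\loc}$ (Proposition-Definition \ref{loc action}), the expansion $m_i^{l_i} = (p^{\alpha}u_i + r_i)^{l_i}$ is now a \emph{finite} binomial sum rather than the infinite series of the positive-index case; consequently every harmonic sum appearing in the resulting formula, both the $\har_m$ factors and the $\har_{p^{\alpha}}$ factors, carries only non-positive indices and is itself a Faulhaber polynomial summation. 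By Proposition-Definition \ref{numbers mathcal B} the coefficients of all such summations are the numbers $\mathcal{B}^{\delta,\delta'}$, which are polynomials in Bernoulli numbers with coefficients in $\mathbb{Q}(\xi)$. Hence the expression of $\har_{p^{\alpha}m}$ given by equation (\ref{eq: the new equation}) is a polynomial in $m$ over $\mathbb{Q}(\xi)$, and the coefficient isolated in Definition \ref{def localized pMZV} lies in $\mathbb{Q}(\xi)$.

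For the upper bound $l \leq d$, I would compute the degree in $m$ of the \emph{weighted} sum $\har_{p^{\alpha}m}$. By the iterated Faulhaber formula the plain sum $\frak{h}_{p^{\alpha}m}$ is a polynomial in $p^{\alpha}m$, hence in $m$, of degree at most $l_1 + \ldots + l_d + d = -(n_1 + \ldots + n_d) + d$; but the weighting by the non-positive power $m^{n_1 + \ldots + n_d}$ in the definition of the weighted sums (Proposition-Definition \ref{loc mhs}(ii)) brings this degree down to exactly $d$. Since $\zeta_{p,\alpha}^{\Ad}(l;(n_i);(\xi^{j_i}))_d$ is the coefficient of $\xi^{jm}m^{l}\har_m(\emptyset)$ and the depth-$0$ factor $\har_m(\emptyset)$ contributes no positive power of $m$, this coefficient must vanish for $l > d$.

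For the lower bound, which is exactly the statement that the weight $l + n_1 + \ldots + n_d$ is $\geq 1$, I would invoke the homogeneity of the harmonic Frobenius and of $\circ_{\har}^{\Sigma,\ast,\loc}$ for the weight grading in which $\har_m(w)$ and $\har_{p^{\alpha}}(w)$ have weight $\weight(w)$. Through the formulas asserted in part ii of the theorem (the extension of \cite{I-2}, \cite{I-3}; Proposition-Definition \ref{iter localized}), the extracted coefficient is identified with the localized coefficient of the adjoint series $\Phi_{p,\alpha}^{-1}e_1\Phi_{p,\alpha}$ on the word of weight $l + n_1 + \ldots + n_d$. Because $\Phi_{p,\alpha}$ is grouplike and $e_1$ has weight $1$, this series is supported in weights $\geq 1$, while the localization map preserves weight and the empty word already carries coefficient $0$; hence the coefficient vanishes whenever $l + n_1 + \ldots + n_d \leq 0$. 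Combining the two bounds gives $1 - (n_1 + \ldots + n_d) \leq l \leq d$, which forces in particular $l_1 + \ldots + l_d \leq d - 1$ for nonvanishing.

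The main obstacle will be the weight and degree bookkeeping underlying the third step: one must verify that the depth-$0$ extraction of the coefficient of $\har_m(\emptyset)$ inside the localized harmonic action is genuinely compatible with the weight grading, keeping careful track of the boundary contributions produced by the mixed strict and large inequalities of the domain $\tilde{\Delta}$ and of the off-by-one terms generated by the Euclidean divisions. The rationality and the upper bound reduce to a transparent degree count once total negativity is exploited, but establishing the lower bound cleanly is where the homogeneity of the entire localized formalism has to be used, and this is the step I expect to be delicate.
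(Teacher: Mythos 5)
Your first two steps follow the paper's own (very terse) argument: the proof given there is simply that everything ``follows directly from Proposition-Definition \ref{numbers mathcal B} and Definition \ref{def localized pMZV}''. That is, once all the $n_{i}$ are non-positive, the infinite binomial series $\sum_{l_{i}\geq 0}\binom{-n_{i}}{l_{i}}(\cdots)^{l_{i}}$ that drive the positive-index case degenerate into the finite Faulhaber-type summations of equation (\ref{eq:recurrence}); the coefficients $\mathcal{B}^{\delta,\delta'}_{(m_{i});(\xi^{j_{i}})}$ lie in $\mathbb{Q}(\xi)$, which gives the rationality, and the finite range of the exponent $\delta$ in that same equation gives \emph{both} inequalities on $l$ after the reweighting by $m^{n_{1}+\ldots+n_{d}}$ of Proposition-Definition \ref{loc mhs}. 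Your degree count for the upper bound $l\leq d$ is exactly this.

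The gap is in your third step. The lower bound does not need, and should not be routed through, an identification of the extracted coefficient with a coefficient of $\Phi_{p,\alpha}^{-1}e_{1}\Phi_{p,\alpha}$ on ``the word of weight $l+n_{1}+\ldots+n_{d}$'': for totally negative indices that weight can be non-positive and there is no word of the (localized) shuffle algebra carrying it, so the grouplikeness/weight-support argument has nothing to act on. Moreover the localized adjoint values are defined in Definition \ref{def localized pMZV} purely as coefficients extracted from the harmonic-action identity (\ref{eq: the new equation}), and in the totally negative case they are not coefficients of the adjoint series at all --- they are the algebraic numbers $\mathcal{B}^{\delta,\delta'}$ themselves, up to powers of $p^{\alpha}$, as the depth-one and depth-two Examples following the Proposition show; so invoking Proposition-Definition \ref{iter localized} and part (ii) of the main theorem here is both unavailable (those formulas are what the extraction is meant to replace) and unnecessary. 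The elementary source of the lower bound is the same polynomial identity used for the upper bound: the summation polynomial $\sum_{0<m_{1}<\ldots<m_{d}<m}\prod m_{i}^{l_{i}}(\cdots)$ vanishes at $m=0$, hence has no constant term, i.e. $\delta\geq 1$ in Proposition-Definition \ref{numbers mathcal B}, and translating $\delta\geq 1$ through the reweighting gives the lower bound on $l$. (Be aware that matching the resulting inequalities to the ones printed in the Proposition requires pinning down the sign conventions of the weighting, and the printed bounds do not obviously agree with the depth-one Example; but that is an issue with the statement, not with the method.)
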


\begin{proof} Follows directly from Proposition-Definition \ref{numbers mathcal B}
and Definition \ref{def localized pMZV}.
\end{proof}

In the next statement, we write formulas for some examples of the totally negative localized Ad $p$MZV$\mu_{N}$'s for $\mathbb{P}^{1} - \{0,1,\infty\}$.

\begin{Example} Depth one and two, $N=1$. Let $n_{1},n_{2} \in \mathbb{N}$.
\begin{equation}
\zeta_{p,\alpha}^{\Ad}(l+n_{1};-n_{1}) 
= \left\{
\begin{array}{ll}  \displaystyle (p^{\alpha})^{-n_{1}}\mathcal{B}_{l}^{n_{1}} = \frac{{n_{1}+1 \choose l_{1}}}{n_{1}+1} B_{n_{1}+1-l_{1}} \text{ if } 1 \leq l \leq n_{1}+1
\\ 0 \text{ otherwise}
\end{array} \right. 
\end{equation}
\begin{multline}
\zeta_{p,\alpha}^{\Ad}(l+n_{2}+n_{1};-n_{2},-n_{1})
\\ = \left\{
\begin{array}{ll} 
(p^{\alpha})^{-n_{2}-n_{1}} \mathcal{B}_{l}^{n_{2},n_{1}} = \sum_{l_{1}=1}^{n_{1}+1} \displaystyle \frac{{n_{1}+1 \choose l_{1}}}{n_{1}+1} \frac{{l_{1}+n_{2}+1 \choose l }}{k_{1}+n_{2}+1} B_{n_{1}+1-l_{1}} B_{l_{1}+n_{1}+1-l} \text{ if } 1 \leq l \leq 2 + n_{1}+n_{2}
\\ 0 \text{ otherwise}
\end{array} \right. 
\end{multline}
\end{Example}

\section{Localized iteration of the harmonic Frobenius}

The map of iteration of the $\Sigma$-harmonic Frobenius introduced in \cite{I-3} gives a canonical expression of multiple harmonic sums of the form $\har_{q^{\tilde{\alpha}}}$ in terms of multiple harmonic sums of the form $\har_{q^{\tilde{\alpha}_{0}}}$, for $(\tilde{\alpha}_{0},\tilde{\alpha}) \in (\mathbb{N}^{\ast})^{2}$, built by using sums of series. We review it (\S4.1) and explain briefly its generalization to the "localized" framework of \S3.

\subsection{Review of the map of iteration of the harmonic Frobenius}

Let $\Lambda$ and $\textbf{a}$ be formal variables. For $n \in \mathbb{N}^{\ast}$, let $\pr_{n} : K \langle \langle \frak{e} \rangle\rangle \rightarrow K \langle\langle \frak{e} \rangle\rangle$ be the map of "projection onto the terms of weight $n$" i.e. the sequence $(\pr_{n})_{n \in \mathbb{N}}$ is characterized by : for all $f \in K \langle \langle \frak{e}\rangle\rangle$, and $\lambda \in K^{\ast}$, $\tau(\lambda)(f) = \sum_{n \in \mathbb{N}} \pr_{n}(f)\lambda^{n}$.
\newline Let $(\tilde{\alpha}_{0},\tilde{\alpha}) \in (\mathbb{N}^{\ast})^{2}$ such that $\tilde{\alpha}_{0} | \tilde{\alpha}$.
\newline Below, we are using notations of \cite{I-3}. There exists an explicit map, the $\Sigma$-harmonic iteration of the Frobenius $$(\widetilde{\text{iter}}_{\har}^{\Sigma})^{\textbf{a},\Lambda} : (K\langle\langle \frak{e} \rangle \rangle_{\har}^{\Sigma})_{S} \rightarrow K[[\Lambda^{\textbf{a}}]][\textbf{a}](\Lambda)\langle\langle \frak{e} \rangle\rangle^{\smallint_{0}^{1}}_{\har} $$
\noindent such that, the map $(\text{iter}_{\har}^{\Sigma})^{\frac{\alpha}{\alpha_{0}},p^{\alpha_{0}}} : (K\langle\langle \frak{e}\rangle \rangle_{\har}^{\Sigma})_{S} \rightarrow K\langle\langle \frak{e}\rangle\rangle^{\smallint_{0}^{1}}_{\har}$ defined as the composition of $\widetilde{\text{iter}}_{\har,\Sigma}^{\textbf{a},\Lambda}$ by the reduction modulo $(\textbf{a}-\frac{\alpha}{\alpha_{0}},\Lambda-p^{\alpha_{0}})$, satisfies,
\begin{equation} \label{eq:third of I-3}
\har_{q^{\tilde{\alpha}}} = \iter_{\har,\Sigma}^{\frac{\alpha}{\alpha_{0}},p^{\alpha_{0}}} (\har_{q^{\tilde{\alpha}_{0}}})
\end{equation}
\noindent This was used in \cite{I-3} to study the iterated Frobenius as a function of its number of iterations, with the application to have a natural indirect explicit computation of the $p$MZV$\mu_{N}$'s associated with Frobenius-invariant paths (Definition \ref{MZV Coleman}, Definition \ref{MZV Coleman}).

\subsection{Generalization to the localized setting}

\begin{Definition} \label{iter localized} Let the localized iteration of the $\Sigma$-harmonic Frobenius be the map $(\text{iter}_{\har}^{\Sigma})^{\frac{\alpha}{\alpha_{0}},p^{\alpha_{0}}}$ composed with the map $\loc^{\vee}$, dual of the map $\loc^{\Sigma}$ of Proposition-Definition 2.15.
\end{Definition}

\noindent Alternatively, we can construct this map by generalizing the procedure of \cite{I-3} for defining the localized iteration of the $\Sigma$-harmonic Frobenius. Taking a multiple harmonic sums whose domain of summation is of the form $0<m_{1}<\ldots<m_{d}<q^{\tilde{\alpha}}$, we introduce the new parameters $v_{1},\ldots,v_{d}$ equal respectively to the $q$-adic valuations of $m_{1},\ldots,m_{d}$ ; we write $m_{i}= q^{v_{i}}(qu_{i}+r_{i})$ with $r_{i} \in \{1,\ldots,q^{\tilde{\alpha}}-1\}$. We rewrite the domain of summation defined by inequalities $m_{1}<\ldots<m_{d}$ in terms of $v_{i}$'s, $u_{i}$'s and $r_{i}$'s, and sum over these new variables.
\newline 
\newline In the next version of this paper, we will use this map to define a generalization to the localized setting of the Ad$p$MZV$\mu_{N}$'s in the sense of Definition \ref{MZV Coleman} and Definition \ref{MZV Coleman bis}, i.e. the $p$MZV$\mu_{N}$'s in the sense of Coleman integration. 

\section{Localization and algebraic relations}

\subsection{Generalities}

In this paragraph we take the context of \S2.1 : $\pi_{1}^{\un,\DR}(X)$ where $X=(\mathbb{P}^{1} - \{0=z_{0},z_{1},\ldots,z_{r},\infty\})/K$, $K$ being a field of characteristic zero.

\subsubsection{Localization and shuffle equation}

The shuffle equation (\ref{shuffle equation}) is satisfied by multiple polylogarithms (Proposition-Definition \ref{prop connexion}) : namely, we have for all words $w,w'$ on the alphabet $\frak{e}$, $\Li[w\text{ }\sh\text{ }w'] = \Li[w]\Li[w']$.
If we apply $(z-z_{i})\frac{d}{dz}$ a certain number of times to this equation (where $i \in \{0,\ldots,r\}$), since this operator is a derivation, we obtain a variant of the shuffle relation which applies to certain localized multiple polylogarithms. The right-hand side of the relation obtained in this way is encoded by the following generalization of the deconcatenation coproduct $\Delta_{\dec}$ below, which is well-defined on a quotient of the space of words.

\begin{Definition} i) Let $i \in \{0,\ldots,r\}$. Let $e_{z_{i}}^{-\mathbb{N}}\mathbb{Q}\langle \frak{e}\rangle$ be the $\mathbb{Q}$-vector space freely generated by words of the form $e_{z_{i}}^{l}w$ with $w$ a word over $\frak{e}$ and $l \in \mathbb{Z}$. Let $I_{\loc}$ be the ideal of $e_{z_{i}}^{-\mathbb{N}}\mathbb{Q}\langle \frak{e}\rangle$ generated by the relations $\sum_{(w'_{1},w'_{2})\text{ }|\text{ }w'_{1}w'_{2}=w'} w'_{1} \otimes w'_{2} = \sum_{(w_{1},w_{2})\text{ }|\text{ }w_{1}w_{2}=e_{0}^{l}w'} \sum_{m=0}^{l} {l \choose m} e_{0}^{-m}w_{1} \otimes e_{0}^{-(l-m)}w_{2}$ for $w'$ word over $e_{0 \cup \mu_{N}}$ and $l \in \mathbb{N}$.
\newline ii) Let 
$\Delta_{\dec} : \mathbb{Q}\langle e_{0},e_{0}^{-1},e_{\mu_{N}}\rangle / I_{\loc} \rightarrow \mathbb{Q}\langle e_{0},e_{0}^{-1},e_{\mu_{N}}\rangle / I_{\loc} \otimes \mathbb{Q}\langle e_{0},e_{0}^{-1},e_{\mu_{N}}\rangle / I_{\loc}$ be the linear map defined by, for all words $w$ over $e_{0 \cup \mu_{N}}$ :
$\Delta_{\dec}(e_{z_{i}}^{-l}w) = 
\sum_{(w_{1},w_{2})\text{ }|\text{ }w_{1}w_{2}=w} \sum_{m=0}^{l} {l \choose m} e_{z_{i}}^{-m}w_{1} \otimes e_{0}^{-(l-m)}w_{2} $
\end{Definition}

\begin{Definition} Let $K\langle\langle e_{z_{i}}^{-\mathbb{N}} \frak{e} \rangle\rangle$ be the set of linear maps $e_{z_{i}}^{-\mathbb{N}}\mathbb{Q}\langle \frak{e} \rangle \rightarrow K$.
\end{Definition}

\subsubsection{Localization quasi-shuffle relation}

The quasi-shuffle relation \cite{Hoffman} is a consequence of the fact that a product of two sets
$\{(m_{1},\ldots,m_{d}) \in \mathbb{N}^{d}\text{ }|\text{ }0<m_{1}<\ldots<m_{d}<m\}$ and
$\{(m'_{1},\ldots,m_{d'}) \in \mathbb{N}^{d'}\text{ }|\text{ }0<m'_{1}<\ldots<m'_{d}<m\}$ can be written canonically as a disjoint union of sets of the same type in $\mathbb{N}^{r}$, $r \in \{\max(d,d'),\ldots,d+d'\}$. For example, if $d=d'=1$ :
\begin{multline} \{ m_{1} \in \mathbb{N}\text{ }|\text{ }0<m_{1}\}
\times \{ m'_{1} \in \mathbb{N}\text{ }|\text{ }0<m'_{1}\}
= \bigg( \{(m_{1},m'_{1}) \in \mathbb{N}^{2}|\text{ }0<m_{1}<m'_{1}<m \} \\ \amalg 
\{(m_{1},m'_{1}) \in \mathbb{N}^{2}\text{ }|\text{ }0<m'_{1}<m_{1}<m \}
\amalg
\{(m_{1},m'_{1}) \in \mathbb{N}^{2}\text{ }|\text{ }0<m_{1}=m'_{1}<m\} \bigg)
\end{multline}
\noindent The quasi-shuffle relation of MZV$\mu_{N}$'s is obtained by applying this equality to multiple harmonic sums and taking the limit $m \rightarrow \infty$ in $\mathbb{C}$. Example : $\zeta(n)\zeta(n') = \zeta(n,n') + \zeta(n',n) + \zeta(n+n')$.
\newline It can be encoded in the form $\frak{h}_{m}(w) \frak{h}_{m}(w') = \frak{h}_{m}(w \ast w')$ where $\ast$ is a bilinear map $\mathcal{O}^{\sh,e_{0 \cup \mu_{N}}} \times \mathcal{O}^{\sh,e_{0 \cup \mu_{N}}} \rightarrow \mathcal{O}^{\sh,e_{0 \cup \mu_{N}}}$ called the quasi-shuffle product \cite{Hoffman}.
\newline The following statement is clear ; it should also be a priori already known and appear in several works, although we do not have references :

\begin{Proposition-Definition} \label{localized adjoint quasi shuffle} There exists an explicit bilinear map $\ast_{\loc} : e_{0}^{-1}\mathcal{O}^{\sh,e_{0 \cup \mu_{N}}} \times e_{0}^{-1}\mathcal{O}^{\sh,e_{0 \cup \mu_{N}}} \rightarrow e_{0}^{-1}\mathcal{O}^{\sh,e_{0 \cup \mu_{N}}}$ (where the factor $e_{0}^{-1}$ means the localization at the multiplicative part generated by $e_{0}$ for the concatenation product), the localized quasi-shuffle product, such that, for localized multiple harmonic sums in the sense of Definition \ref{loc mhs}, we have, for all $w,w'$ localized words as in that Definition, 
$$ \frak{h}_{m}(w)\frak{h}_{m}(w') = \frak{h}_{m}(w \ast_{\loc} w') $$
\end{Proposition-Definition}

\subsection{Application to localized Ad$p$MZV$\mu_{N}$'s}

We consider now the context of \S2.3.2 and \S3 : $\pi_{1}^{\un,\crys}(\mathbb{P}^{1} - \{0,\mu_{N},\infty\})$.
\newline \indent In \cite{II-1} we defined a notion of adjoint double shuffle relations, satisfied by the Ad$p$MZV$\mu_{N}$'s of \ref{def adjoint MZV}. This includes a notion of adjoint quasi-shuffle relations. In \cite{II-2}, we have showed that the adjoint quasi-shuffle relations of Ad$p$MZV$\mu_{N}$'s can be retrieved by the formulas of part I involving the pro-unipotent harmonic actions. Here is a variant for the localized Ad$p$MZV$\mu_{N}$'s introduced in 
\ref{def localized pMZV}.

\begin{Proposition} The localized Ad$p$MZV$\mu_{N}$'s satisfy a canonical family of polynomial equations which generalizes the adjoint quasi-shuffle relations of \cite{II-1}, and which we call the localized adjoint quasi-shuffle relations.
\end{Proposition}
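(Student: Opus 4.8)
The plan is to transport the localized quasi-shuffle identity of Proposition-Definition \ref{localized adjoint quasi shuffle} through the defining equation (\ref{eq: the new equation}), extracting on both sides the coefficient which defines the localized Ad$p$MZV$\mu_{N}$'s in Definition \ref{def localized pMZV}. This mirrors the strategy by which, in \cite{II-2}, the adjoint quasi-shuffle relations of \cite{II-1} were recovered from the harmonic action formulas. The starting input is that, for every $m \in \mathbb{N}^{\ast}$ and all localized words $w,w'$, one has $\frak{h}_{m}(w)\frak{h}_{m}(w') = \frak{h}_{m}(w \ast_{\loc} w')$, which by the definition of the weighted variants descends to the same identity for the weighted localized multiple harmonic sums $\har_{m}$.

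Concretely, fix localized words $w,w'$. First I would write the product $\har_{p^{\alpha}m}(w)\,\har_{p^{\alpha}m}(w')$ in two ways, as identities in $\Map(\mathbb{N},\cdot)$, i.e.\ as functions of $m$. On one hand, the localized quasi-shuffle applied at the level $p^{\alpha}m$ gives directly $\har_{p^{\alpha}m}(w)\,\har_{p^{\alpha}m}(w') = \har_{p^{\alpha}m}(w \ast_{\loc} w')$. On the other hand, I would substitute into each factor the decomposition (\ref{eq: the new equation}), $\har_{p^{\alpha}\mathbb{N},\loc} = \har_{p^{\alpha}}\circ_{\har,\ast,\loc}^{\Sigma}\har_{\mathbb{N}}^{(p^{\alpha})}$, which expresses $\har_{p^{\alpha}m}(w)$ as a $\mathbb{Q}(\xi)$-linear combination of terms $\xi^{jm}m^{\delta}\har_{m}(\cdot)\har_{p^{\alpha}}(\cdot)$. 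Expanding the product of these two linear combinations and comparing with $\har_{p^{\alpha}m}(w\ast_{\loc}w')$, itself decomposed the same way, yields an identity of functions of $m$.

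The relations are then obtained by extracting from this identity the coefficient of $\xi^{jm}m^{l}\har_{m}(\emptyset)$, exactly as in Definition \ref{def localized pMZV}. After one reduces the cross-terms $\har_{m}(v)\har_{m}(v')$ appearing on the left by the localized quasi-shuffle for $\har_{m}$, the $\har_{m}(\emptyset)$-component of the left-hand product is carried only by products of the factors $\har_{p^{\alpha}}(\cdot)$, i.e.\ by products of localized Ad$p$MZV$\mu_{N}$'s; the right-hand side contributes single localized Ad$p$MZV$\mu_{N}$'s weighted by the $\mathcal{B}^{\delta,\delta'}$-coefficients of Proposition-Definition \ref{numbers mathcal B}. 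Equating the two gives a canonical quadratic, hence polynomial, family of relations among the $\zeta_{p,\alpha}^{\Ad}(l;(n_{i});(\xi^{j_{i}}))$, which specializes to the adjoint quasi-shuffle relations of \cite{II-1} when all $n_{i}>0$, i.e.\ when no localization occurs.

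The main obstacle is the control of the $\har_{m}(\emptyset)$-projection of the product on the left-hand side. Multiplying the two $m$-expansions produces cross-terms $m^{\delta+\delta'}\har_{m}(v)\har_{m}(v')$, and isolating their $\har_{m}(\emptyset)$-part requires reducing $\har_{m}(v)\har_{m}(v')$ through the localized quasi-shuffle and then tracking which summands produce the empty word; this is exactly the step where the known quasi-shuffle for $\har_{m}$ must be ``divided out'' so that only the relation carried by the $\har_{p^{\alpha}}$-factors survives. One must moreover check that this extraction is well defined modulo the localization ideal generated by $e_{z}e_{z}^{\inv}=e_{z}^{\inv}e_{z}=1$, so that the resulting relations do not depend on the chosen representatives of localized words. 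I expect both points to follow from the same induction on depth used to establish Proposition-Definition \ref{loc for har n} and Proposition-Definition \ref{localized adjoint quasi shuffle}, together with the compatibility of $\circ_{\har,\ast,\loc}^{\Sigma}$ with the quasi-shuffle that is built into its construction in Proposition-Definition \ref{loc action}.
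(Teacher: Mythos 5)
Your proposal follows essentially the same route as the paper's proof: write the localized quasi-shuffle relation at level $p^{\alpha}m$, substitute the decomposition of $\har_{p^{\alpha}\mathbb{N},\loc}$ from equation (\ref{eq: the new equation}) into both sides, reduce the resulting products $\har_{m}(v)\har_{m}(v')$ by the quasi-shuffle again, and extract coefficients (the paper phrases this last step as linear independence of the $\har_{m}$'s over the ring of overconvergent power series of $m$, which subsumes your extraction of the $\xi^{jm}m^{l}\har_{m}(\emptyset)$-component). The argument is correct and matches the intended one.
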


\begin{proof} Same with the proof in \cite{II-2} of the fact that we can retrieve the fact that Ad$p$MZV$\mu_{N}$'s satisfy the adjoint quasi-shuffle relations from the fact that multiple harmonic sums satisfy the quasi-shuffle equation and the equation relating Ad$p$MZV$\mu_{N}$'s and multiple harmonic sums involving the $p$-adic pro-unipotent harmonic action $\circ_{\har}^{\smallint_{0}^{z<<1}}$.
\newline Here, let us write the localized quasi-shuffle relation for the multiple harmonic sums $\har_{p^{\alpha}m}(w)\har_{p^{\alpha}m}(w')=\har_{p^{\alpha}m}(w\ast_{\loc}w')$ ; then, we use that $\har_{p^{\alpha}m}$'s have an expression in terms of $\har_{m}$'s and certain power series of variable $m$ whose coefficients are written in terms of localized Ad$p$MZV$\mu_{N}$'s : equation (\ref{eq: the new equation}). By the linear independence of the $\har_{m}$'s over the ring of overconvergent power series expansion of $m \in \mathbb{N} \subset \mathbb{Z}_{p}$, this implies a family of polynomial equations satisfied by the localized Ad$p$MZV$\mu_{N}$'s which we call as in the statement.
\end{proof}

\end{document}